\documentclass[mathpazo]{csiam-tls}
\renewcommand\thisnumber{x}
\renewcommand\thisyear {2026}
\renewcommand\thismonth{xx}
\renewcommand\thisvolume{x}

\newcommand{\dd}{\mathrm{d}}
\usepackage{booktabs}            
\usepackage{enumitem}
\usepackage{diagbox}
\usepackage[normalem]{ulem}
\usepackage{tikz}
\usetikzlibrary{arrows.meta}
\usetikzlibrary{calc}

\begin{document}
\title{Curvature Effects on Boundary Accumulation of Self-Propelled Particles: A Fokker–Planck Approach}


\author[J. Fu et al.]{Jingyi Fu\affil{1},\ Jinqi Liu\affil{1},\ Benoit Perthame \affil{2},\ and Min Tang\affil{1}\corrauth}
\address{
\affilnum{1}\ School of Mathematics, Institute of Natural Sciences, Shanghai Jiao Tong University, 200240, Shanghai.\\
\affilnum{2}\ Sorbonne Université, CNRS, Université de Paris, Inria, Laboratoire Jacques-Louis Lions UMR7598, F-75005 Paris.\\
}
%
%
\emails{{\tt nbfufu@sjtu.edu.cn} (J.~Fu), {\tt lyuu\_shinnki@sjtu.edu.cn} (J.~Liu), {\tt Benoit.Perthame@sorbonne-universite.fr} (B.~Perthame), {\tt tangmin@sjtu.edu.cn} (M.~Tang)}

\begin{abstract}
It is well known that self-propelled particles tend to accumulate near boundaries. Although the influence of boundary geometry on this accumulation has been extensively studied using individual-based models, a population-level PDE description becomes desirable when a large number of particles are involved. In this work, we investigate boundary accumulation of self-propelled particles confined in two-dimensional circular and annular domains. We develop Fokker--Planck models that qualitatively reproduce the behavior observed in individual-based simulations. The main contribution is the formulation of appropriate evolution equations for particles accumulated at the boundary, incorporating transitions between the interior free-swimming state and the boundary-contact state. Starting from a general spatial two-dimensional model, we simplify the associated boundary conditions by assuming radial symmetry and then design an exponentially weighted difference scheme that preserves the total mass of the coupled system. Because transitions between different particle states induce singular behavior near the boundary, establishing discrete mass conservation is nontrivial. In particular, the velocity space must be divided into two subdomains, with the resulting discontinuity treated explicitly. Stochastic simulations and numerical experiments are performed both to validate the proposed models and to investigate the effects of geometric and dynamical parameters on boundary accumulation.
\end{abstract}

\ams{35Q84, 35Q92, 65C05, 65M06, 92C17
}
\keywords{Fokker-Planck equation; Self-propelled particles; Confined environment; Monte-Carlo simulation; Finite difference method.}

\maketitle

\section{Introduction}\label{ch:1}
Microorganisms are ubiquitous in natural environments and commonly inhabit spatially confined domains. Unlike passive Brownian particles, these microswimmers actively propel themselves through flagellar beating, ciliary motion, or other intrinsic mechanisms \cite{berg2004ecoli,elgeti2015physics,lauga2016bacterial}. Since their habitats are bounded by physical interfaces, interactions with confining boundaries are unavoidable and strongly influence transport, spatial organization, and collective dynamics \cite{bechinger2016active}. Consequently, understanding the effects of confinement is crucial in the study of active microorganisms.

 Experiments have consistently shown that active particles accumulate near boundaries \cite{cammann2021emergent,caprini2018active,elgeti2013wall,li2011accumulation,li2009accumulation,ostapenko2018curvature,rothschild1963nonrandom}. Several mechanisms have been proposed to explain such behavior, including hydrodynamic attraction \cite{berke2008hydrodynamic,giacche2010hydrodynamic}, steric interactions \cite{kantsler2013ciliary,li2009accumulation}, boundary-induced reorientation \cite{bianchi2017holographic} and short-range contact effects \cite{mousavi2020wall}. Boundary geometry, particularly local curvature, has been shown to substantially modify boundary-layer structure and particle distributions \cite{cammann2021emergent,duzgun2018active,nikola2016active,ostapenko2018curvature,wysocki2015giant}.


Physicists have extensively explored the effects of confinement through experiments, trajectory analyses and stochastic simulations of individual-based models. Ostapenko et al. experimentally tracked individual microalgae cells in circular and elliptical chambers and demonstrated that the probability of finding a cell near the boundary increases with wall curvature. The experimental results were well fitted by an individual-based Brownian-dynamics model, supporting the proposed mechanism of curvature-guided motility \cite{ostapenko2018curvature}. Building on this curvature-dependent navigation, Cammann et al. experimentally measured single-cell trajectories in confined geometries and combined these measurements with numerical simulations. They showed that spatial variations in boundary curvature generate nonequilibrium probability fluxes, whereas a circular domain with uniform curvature remains symmetric and exhibits no such fluxes \cite{cammann2021emergent}. These studies establish that boundary curvature can control not only where active particles accumulate, but also how they are transported within confined domains. For circular domains, Cruz et al. experimentally observed pronounced accumulation of active particles near the boundary and used a stochastic model to characterize their confined dynamics \cite{cruz2024active}. In annular confinement, Fazli and Naji used an individual-based model to investigate the steady-state distribution of active particles and found a pronounced difference in particle density between the inner and outer boundaries, demonstrating that the two boundaries contribute differently to particle accumulation \cite{zahra2021active}.

While particle-level approaches provide detailed information on individual trajectories and can reproduce observables such as particle density and flux, individual-based simulations are often computationally expensive and offer limited analytical insight into how boundary geometry regulates particle accumulation. In particular, it is generally difficult to derive explicit relations between local boundary curvature and the resulting near-boundary particle distribution from particle simulations alone.

The time evolution of the particle probability density function gives rise to continuum PDE models that offer a complementary population-level description. Kinetic PDE models have been proposed for stochastic velocity-jump processes and biased random walks \cite{alt1980biased,othmer1988models,stroock1974some}, and their corresponding diffusion and hydrodynamic limits have been extensively studied \cite{chalub2004kinetic,dolak2005kinetic,othmer2000diffusion,othmer2002diffusion}. Studying PDE models for the particle probability distribution not only reduces computational complexity but also enables a systematic investigation, using techniques from PDE analysis, of how confinement and boundary geometry influence the macroscopic dynamics.
 

To account for boundary effects, interactions with confining boundaries have traditionally been incorporated through boundary conditions in classical models for passive particles, such as gas molecules. More recently, boundary populations have been introduced as additional unknowns, leading to coupled bulk–boundary models in which near-boundary accumulation arises through exchange between particles in the interior and particles attached to the boundaries \cite{angelani2017confined,berlyand2020kinetic,ezhilan2015distribution,fu2023confined,fu2021fokker,das2018confined,malakar2018steady,nikola2016active,wagner2022steady}. However, most existing studies have focused on relatively simple geometries, such as planar walls. Extending these formulations to bounded domains with curved boundaries remains challenging, because the local geometry enters both particle transport and the coupling between bulk and boundary states. In this paper, we develop a continuum framework that can incorporate curvature effects for understanding particle distributions in more complex realistic confined environments. Fokker–Planck models for self-propelled particles in two-dimensional circular and annular domains are proposed, and used to investigate how boundary curvature shapes near-boundary accumulation.

The remainder of this paper is organized as follows. Sections \ref{ch:2} and \ref{ch:3} present the mathematical models for circular and annular domains, respectively, which account for particle exchange between the bulk and the boundary. Section \ref{ch:4} describes the numerical discretization, combining an upwind scheme in the radial direction with an exponentially weighted difference scheme in the tangential direction. Numerical results are presented in Section \ref{ch:5}, where we compare the individual-based models with the Fokker–Planck models. We conclude the paper in Section \ref{ch:6}.

\section{Models in Circular Domains}\label{ch:2}
\subsection{The Individual-Based Model}\label{ch:2.1}
Before introducing the governing equations of the individual-based model, we first state the assumptions underlying the model. Each particle moves with a constant speed \(V\), while its orientation \(\theta\) evolves according to a rotational diffusion process with strength \(D_\theta\). For a particle confined in a circular domain of radius \(R\), its direction of motion can be decomposed into radial and tangential components. When a particle reaches the boundary with an outward radial velocity, its radial motion is blocked. If its tangential velocity component is nonzero, the particle moves along the boundary, as illustrated in Figure \ref{fig:circulardemonstration}. Since the velocity decomposes naturally into radial and tangential components, polar coordinates \((r,\xi)\) are more appropriate for describing the particle dynamics.
\begin{figure}
    \centering
    \begin{tikzpicture}[>=Stealth]
\begin{scope}[shift={(0,0)}]
\draw[->] (-2.8,0) -- (2.8,0) node[right] {$x$};
\draw[->] (0,-2.8) -- (0,2.8) node[right] {$y$};
\draw (0,0) circle (2);
\node[below] at (2.35,0) {$R$};
\draw[dashed] (0,0) -- (-1.414,1.414);
\draw[dashed] (0,0) -- (1.414,-1.414);
\draw[dashed] (0,0) -- (1.414,1.414);
\draw[dashed] (-0.9473,1.3696) -- (-0.3,1.3696);
\draw[->] (-0.9473,1.3696) -- (-0.3473,1.9696);
\draw[->] (1.3696,-0.9473) -- (1.9696,-0.3473);
\draw[->] (0.814,0.814) -- (1.414,1.414);

\draw[->]
(100:1.8)
arc[start angle=100,end angle=48,radius=1.8];
\draw[->]
(-10:1.8)
arc[start angle=-10,end angle=42,radius=1.8];

\draw (-0.9473,1.3696) ++(0:0.3)
      arc[start angle=0,end angle=45,radius=0.3];
\node at (-0.9473+0.4,1.3696+0.2) {\small$\theta$};
\end{scope}
\end{tikzpicture}
    \caption{When rotational diffusion is neglected, a particle with orientation \(\theta=\pi/4\) first collides with the boundary and then moves along it. Its orientation remains unchanged, but its direction of motion changes due to confinement.}
    \label{fig:circulardemonstration}
\end{figure}

In polar coordinates, we formulate the individual-based model for particle dynamics in a two-dimensional circular domain. As in \cite{ostapenko2018curvature,fily2015dynamics}, the model neglects hydrodynamic interactions and collisions between particles. Let \((r_t,\xi_t)\) denote the position of the particle at time \(t\), and let \(\theta_t\) denote its orientation. The particle velocity can be decomposed into the  radial component $V_r = V\cos(\theta_t-\xi_t)$ and tangential component $V_\xi = V\sin(\theta_t-\xi_t)$ as illustrated in Fig. \ref{fig:illustration_of_movement} (a). The individual-based model is written as
\begin{equation}
\left\{
\begin{aligned}
\dd r_t &= 
\Biggl\{
\begin{aligned}
& V \cos(\theta_t-\xi_t)\,\dd t, && r_t < R\,\text{ or }\,r_t=R,\,\cos(\theta_t-\xi_t)<0,\\
& 0, && \text{otherwise,}
\end{aligned}
\Biggr.\\
\dd \xi_t &= \arctan\left(\frac{V\sin(\theta_t-\xi_t)\,\dd t}{V\cos(\theta_t-\xi_t)\,\dd t + r_t}\right),\\
\dd \theta_t &= \sqrt{2 D_\theta}\,\dd B_t.
\end{aligned}\label{eq:circularSDE}
\right.
\end{equation}

Each particle is represented by its position \((r_t,\xi_t)\in[0,R]\times[0,2\pi)\) and orientation \(\theta_t\in\mathbb{T}[0,2\pi)\).
It is important to note that in slab geometry with two parallel plates, the particle orientation coincides with its direction of motion. In a circular domain, however, the situation differs for self-propelled particles: the particle's orientation indicates the direction in which it wants to move, whereas its actual velocity indicates the direction in which it can move. These two directions are not necessarily the same at the boundary. Therefore, in \eqref{eq:circularSDE}, $\theta_t$ represents the direction in which the particle wants to move, while the actual velocity is given by $\frac{\dd r_t}{\dd t}\mathbf e_r+r_t\frac{\dd\xi_t}{\dd t}\mathbf e_\xi$.

The model in \eqref{eq:circularSDE} can be illustrated as follows.
 When \(r_t < R\), the particle lies inside the domain. Over a short time interval \(dt\), the change in \(r_t\) in the radial direction is \(V_r\,dt\), as shown in (\ref{eq:circularSDE}), while the change in \(\xi_t\) is given by the arctangent of  $V\sin(\theta_t-\xi_t)\dd t$ divided by $V\cos(\theta_t-\xi_t)\dd t +r_t$, as illustrated in Figure \ref{fig:illustration_of_movement} (b). We choose to use the arctangent form because it is more consistent with the geometric interpretation of the particle displacement over one time step. For $r_t>0$, it differs from $V\sin(\theta_t-\xi_t)/r_t\,\dd t$ only by a term of order $O(\dd t^2)$ and therefore yields the same continuous-time limit. When the particle is at the boundary \(r=R\) with an inward radial velocity component, i.e. the orientation of the particle is toward the interior of domain such that $\cos(\theta_t-\xi_t)< 0$, the coordinates \((r_t,\xi_t)\) evolve similarly. If the radial velocity component points outward ($\cos(\theta_t-\xi_t)\geq 0$), however, the radial motion is blocked, so \(r_t\) remains constant. In addition, \(\theta_t\) always evolves via a rotational diffusion process, where \(B_t\) denotes standard Brownian motion.

\begin{figure}[!tbh]
    \centering
\begin{tikzpicture}[>=Stealth]
\begin{scope}[shift={(0,0)}]
\draw[->] (-0.8,0) -- (4.8,0) node[right] {$x$};
\draw[->] (0,-0.8) -- (0,4.8) node[right] {$y$};
\node[below] at (4.15,0) {$R$};
\draw (4,0) arc[start angle=0,end angle=90,radius=4];

\coordinate (O) at (0,0);
\coordinate (P) at ({2*cos(30)},{2*sin(30)});

\draw[dashed] (O)--(P)
node[midway,above left,yshift=-3pt,xshift=2pt] {\small$r$};

\fill (P) circle (1.5pt);

\draw[-]
(0.7,0)
arc[start angle=0,end angle=30,radius=0.7];
\node at ({cos(15)},{sin(15)}) {\small$\xi$};

\coordinate (Vtip) at ($(P)+(75:2.0)$);
\draw[->] (P)--(Vtip)
node[near end,above,yshift=10pt,xshift=4pt] {\small$V$};
\coordinate (Pr) at ($(P)+(30:{2*cos(45)})$);
\coordinate (Pt) at ($(P)+(120:{2*sin(45)})$);

\draw[->] (P)--(Pr)
node[near end,right,yshift=2pt,xshift=6pt] {\small$V_r$};
\draw[->] (P)--(Pt)
node[near end,left,yshift=4pt,xshift=-2pt] {\small$V_\xi$};
\draw[dashed] (Pt)--(Vtip);
\draw[dashed] (Pr)--(Vtip);

\draw[dashed] (P)--($(P)+(0:{2*cos(60)})$);
\draw[-]
($(P)+(0.5,0)$)
arc[start angle=0,end angle=75,radius=0.5];

\node at ($(P)+(0.7,0.17)$) {\small$\theta$};
\end{scope}
\begin{scope}[shift={(8,0)}]
\draw[->] (-0.8,0) -- (4.8,0) node[right] {$x$};
\draw[->] (0,-0.8) -- (0,4.8) node[right] {$y$};
\draw (4,0) arc[start angle=0,end angle=90,radius=4];
\node[below] at (4.15,0) {$R$};

\coordinate (O) at (0,0);
\coordinate (P) at ({2*cos(30)},{2*sin(30)});
\coordinate (Vtip) at ($(P)+(75:2.0)$);
\draw[-] (O)--(P)
node[midway,below,yshift=5pt,xshift=12pt] {\small$r_t$};
\draw[->] (P)--(Vtip)
node[near end,above,yshift=10pt,xshift=4pt] {\small$V\mathrm{d}t$};
\draw[-] (O)--(Vtip)
node[midway,above,yshift=3pt,xshift=-5pt] {\small$r_{t+\dd t}$};
\draw
($(O)+(30:0.7)$)
arc[start angle=30,end angle=52.5,radius=0.7];
\node at ($(O)+(45:0.7)+(0.3,0.2)$) {\small$\mathrm{d}\xi_t$};
\coordinate (Pr) at ($(P)+(30:{2*cos(45)})$);
\draw[dashed] (P)--(Pr);
\draw[dashed] (Pr)--(Vtip);
\draw
($(P)+(30:0.5)$)
arc[start angle=30,end angle=75,radius=0.5];

\node at ($(P)+(45:0.7)+(0.2,0)$) {\small$\theta_t-\xi_t$};
\end{scope}
\end{tikzpicture}
\caption{Illustration of the notation. (a): decomposition of velocity, (b): changes of $r_t$ and $\xi_t$ during a short time $\dd t$.}
\label{fig:illustration_of_movement}
\end{figure}

\subsection{The Fokker-Planck Model}\label{ch:2.2}
As in the simple case with two parallel plates \cite{fu2021fokker,fu2023confined}, since particles obey different motion laws in the interior of the domain and at the boundary, we introduce two probability density functions: one for freely moving particles in the interior and the other for particles confined at the boundary.
Let $p(t,r,\xi,\theta)$ denote the probability density of particles in the interior of the domain and at the boundary moving inward, at time $t$ and position $(r,\xi)$, with orientation $\theta$. $q(t,\xi,\theta)$ denotes the probability density function of particles at the boundary $\{r = R\}$ whose orientations point outward, i.e., $\cos(\theta-\xi)>0$. The Fokker-Planck system reads
\begin{equation}
\left\{
\label{eq:PDEsystem}
    \begin{alignedat}{2}
        &\frac{\partial(rp)}{\partial t}+V\cos(\theta-\xi)\frac{\partial(rp)}{\partial r}+\frac{V}{r}\frac{\partial\left[\sin(\theta-\xi)rp\right]}{\partial \xi}-D_\theta\frac{\partial^2(rp)}{\partial \theta^2}=0,&\quad &r\in[0,R],\\
        &\frac{\partial(Rq)}{\partial t}+\frac{V}{R}\frac{\partial\left[\sin(\theta-\xi)Rq\right]}{\partial \xi}-D_\theta\frac{\partial^2(Rq)}{\partial \theta^2}=V\cos(\theta-\xi)Rp(t,R,\xi,\theta),&\quad &\theta\in\left(\xi-\frac{\pi}{2},\xi+\frac{\pi}{2}\right).
    \end{alignedat}
    \right.
\end{equation}
 The equations are written in conservative form for the Jacobian-weighted densities $rp$ and $Rq$ in polar coordinates. The transport coefficients in the $r$ and $\xi$ variables are consistent with the transport velocities $V_r$ in the radial direction and $V_\xi$ in the tangential direction. Note that the velocity of the $\xi$ variable is the angular velocity and $r$ appears in the denominator. The second-order derivatives of $\theta$ takes into account the rotational diffusion and the right-hand side of the second equation in \eqref{eq:PDEsystem} indicates the inflow flux of particles transitioning from the free-swimming state to the boundary-contact state. When the particle is at the boundary and its orientation is pointing outside of the domain, the transportation in the radius direction is zero while the transportation in the tangential direction and the rotational diffusion are unchanged, as in the second equation in \eqref{eq:PDEsystem}. 




 The $\xi$ transportation can make the particle slide along the boundary as in Figure \ref{fig:circulardemonstration} and rotational diffusion can change the orientation direction of the particle. The particle can switch from $q(t,\xi,\theta)$ to $p(t,r,\xi,\theta)$ when its direction of motion changes from toward the boundary to away from the boundary.   
We can see from Figure \ref{fig:circulardemonstration} that when $r=R$, $\theta\in(\xi-\pi/2,\xi+\pi/2)$, the particles are moving towards and confined by the boundary, thus described by $q(t,\xi,\theta)$, while when $r=R$, $\theta\notin(\xi-\pi/2,\xi+\pi/2)$, particles are at free-swimming state, thus described by $p(t,r,\xi,\theta)$. The transitions from confined particles to free-moving particles are embedded in the boundary conditions of the PDE system (\ref{eq:PDEsystem}). The details are described below.

\begin{itemize}[leftmargin=*]
    \item \textbf{Boundary conditions for $p(t,r,\xi,\theta)$.}

    When the particle is at the boundary, while its velocity points into the domain, the particle will leave the boundary immediately, thus we have
\begin{equation}\label{eq:circular0dirichlet}
        p(t,R,\xi,\theta) = 0, \quad \cos(\theta-\xi)<0,
    \end{equation}
    On the other hand, when the particles pass through the origin of the domain, the polar angle should be shifted by $\pi$, continuity of the particle density at the origin is therefore expressed as 
    \begin{equation}
        p(t,0^+,\xi,\theta) = p(t,0^+,\xi+\pi,\theta),\quad\cos(\theta-\xi)>0.\label{eq:compatible}
    \end{equation}
$p(t,r,\xi,\theta)$ is naturally $2\pi$-periodic in the angular variables $\xi$ and $\theta$. More specifically, we have for $r\in(0,R)$,
    \begin{equation}
    \begin{aligned}
        &p(t,r,\xi,\theta) = p(t,r,\xi+2\pi,\theta),\\
        &p(t,r,\xi,\theta) = p(t,r,\xi,\theta+2\pi),
        \quad \frac{\partial}{\partial\theta}p(t,r,\xi,\theta) = \frac{\partial}{\partial\theta}p(t,r,\xi,\theta+2\pi).\label{eq:periodicxi_of_p}
    \end{aligned}
    \end{equation}
    Together with the $2\pi$-periodic boundary condition in the $\xi$ variable in \eqref{eq:periodicxi_of_p}, the boundary condition in (\ref{eq:compatible}) is valid for $\cos(\theta-\xi)<0$ as well. The condition \eqref{eq:compatible} is needed for the well-posedness of the PDE system \eqref{eq:PDEsystem}.

    \item \textbf{Boundary conditions for $q(t,\xi,\theta)$.}

    The particles switch from moving along the boundary to moving freely inside the domain once their polar angle $\xi$ and orientation $\theta$ satisfy $\cos(\theta-\xi)=0$ at $r = R$, hence we specify the following boundary conditions:
    \begin{equation}
        q\left(t,\xi,\xi-\frac{\pi}{2}\right) = q\left(t,\xi,\xi+\frac{\pi}{2}\right) = 0.
        \label{eq:Dirichlet_of_q}
    \end{equation}

    As before, $q$ is $2\pi$-periodic in $\xi$, hence we impose the following periodic boundary condition:
    \begin{equation}
        q(t,\xi,\theta) = q(t,\xi+2\pi,\theta).
    \end{equation}

    \item \textbf{Jump conditions due to transition.}
    
    It remains to characterize the particle behavior at $r=R$ and $\cos(\theta-\xi)=0$, where particles switch between the confined and freely moving  states, generating a flux exchange between $p$ and $q$. Integrating the PDE system (\ref{eq:PDEsystem}) at the point $(t,R,\xi,\xi+\pi/2)$, we get 
    \begin{equation}
    \begin{aligned}
    &\frac{\partial}{\partial t}\int_{(\xi+\pi/2)^-}^{(\xi+\pi/2)^+}rp(t,r,\xi,\theta)\dd\theta+\frac{V}{r}\frac{\partial}{\partial \xi}\int_{(\xi+\pi/2)^-}^{(\xi+\pi/2)^+}\sin(\theta-\xi)rp\dd\theta-D_\theta\frac{\partial}{\partial\theta}(rp)\bigg|_{(\xi+\pi/2)^-}^{(\xi+\pi/2)^+}\\
    =&-\left(\frac{\partial}{\partial t}\int_{(\xi+\pi/2)^-}^{(\xi+\pi/2)^+}Rq(t,\xi,\theta)\dd\theta+\frac{V}{R}\frac{\partial}{\partial \xi}\int_{(\xi+\pi/2)^-}^{(\xi+\pi/2)^+}\sin(\theta-\xi)Rq\dd\theta-D_\theta\frac{\partial}{\partial\theta}(Rq)\Bigg|_{(\xi+\pi/2)^-}^{(\xi+\pi/2)^+}\right)\delta_R(r).
    \end{aligned}    
    \end{equation}

    Assume that $p(t,r,\xi,\theta)\in L^\infty(\mathbb{R}^+\times[0,R]\times[0,2\pi)\times\mathbb{R})$ and $q(t,\xi,\theta)\in L^\infty(\mathbb{R}^+\times[0,2\pi]\times\mathbb{R})$.
    Using \eqref{eq:Dirichlet_of_q} we obtain
    \begin{equation}
    \frac{\partial}{\partial \theta}(rp)\bigg|_{\theta=(\xi+\pi/2)^-}^{\theta=(\xi+\pi/2)^+}=\frac{\partial}{\partial\theta}(Rq)\left(t,\xi,(\xi+\frac{\pi}{2})^-\right)\delta_R(r),\label{eq:jump+}
    \end{equation}
    which indicates a discontinuity in the $\theta$-derivative of $p$. Similarly, at $(t,R,\xi,\xi-\pi/2)$ we have
    \begin{equation}
    \frac{\partial}{\partial \theta}(rp)\bigg|_{\theta=(\xi-\pi/2)^-}^{\theta=\left(\xi-\pi/2\right)^+}=-\frac{\partial}{\partial\theta}(Rq)\left(t,\xi,\left(\xi-\frac{\pi}{2}\right)^+\right)\delta_R(r).\label{eq:jump-}
    \end{equation}

\end{itemize}

\paragraph{Mass Conservation.} As the particles cannot generate or escape, the PDE model (\ref{eq:PDEsystem}) has to satisfy the conservation of mass. This property can be described by the following theorem.

\begin{theorem}\label{th:massconservation}
Consider the PDE system (\ref{eq:PDEsystem}) with nonnegative bounded initial data:
\begin{equation}
    p(0,r,\xi,\theta) = p_0(r,\xi,\theta),\quad q(0,\xi,\theta) = q_0(\xi,\theta).
\end{equation}
The total mass 
\begin{equation}
    M(t) = \int_0^R\int_{0}^{2\pi}\int_{\xi-\pi/2}^{\xi+3\pi/2}rp(t,r,\xi,\theta)\dd\theta\dd\xi\dd r+\int_{0}^{2\pi}\int_{\xi-\pi/2}^{\xi+\pi/2}Rq(t,\xi,\theta)\dd\theta\dd\xi = I_1 + I_2,
\end{equation}
is a constant in time.
\end{theorem}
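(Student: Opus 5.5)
We differentiate $M(t)=I_1+I_2$ in time, substitute the two equations of \eqref{eq:PDEsystem}, and show that every flux term either vanishes by periodicity or cancels against a flux of the other population. A convenient first step is the change of variable $\eta=\theta-\xi$. Then the $\theta$-integration ranges become the fixed intervals $(-\pi/2,3\pi/2)$ for $p$ and $(-\pi/2,\pi/2)$ for $q$. With $\tilde p(t,r,\xi,\eta)=p(t,r,\xi,\xi+\eta)$ we have $\partial_\xi\tilde p=\partial_\xi p+\partial_\theta p$, so the $\xi$-transport term $\frac{V}{r}\partial_\xi[\sin(\theta-\xi)rp]$ becomes $\frac{V}{r}\partial_\xi[\sin\eta\, r\tilde p]-\frac{V}{r}\partial_\eta[\sin\eta\, r\tilde p]$, using $\partial_\theta\sin(\theta-\xi)=0$ in the original variables. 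The same rewriting applies to $q$. All terms are now in divergence form in $(r,\xi,\eta)$ on a fixed box.

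We then integrate term by term.

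\textbf{$\xi$-derivatives.} These vanish by the $2\pi$-periodicity in $\xi$ of $p$ and $q$, using \eqref{eq:periodicxi_of_p} and its analogue for $q$.

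\textbf{$\eta$-transport terms.} For $p$, this term integrates to $\sin\eta\,r\tilde p$ evaluated at $\eta=3\pi/2$ and $\eta=-\pi/2$, which cancel by $\theta$-periodicity. For $q$, it gives $\sin\eta\,R\tilde q$ at $\eta=\pm\pi/2$, which is zero by \eqref{eq:Dirichlet_of_q}.

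\textbf{Radial term.} Since $\partial_r(rp)$ carries the factor $\cos\eta$, integrating in $r$ gives
\[
V\int\!\!\int \cos\eta\,\bigl[Rp(t,R,\cdot)-\lim_{r\to0^+}rp\bigr]\dd\eta\dd\xi .
\]
The contribution at $r=0$ vanishes because $rp\to0$ there, as $p$ is bounded. At $r=R$ the integrand is nonzero only for $\cos\eta>0$, because of \eqref{eq:circular0dirichlet}. This outflux $-V\int\!\!\int_{\cos\eta>0}\cos\eta\,Rp(t,R)$ exactly cancels the source term on the right-hand side of the $q$ equation after integration over $\eta\in(-\pi/2,\pi/2)$.

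\textbf{Diffusion terms.} The diffusion of $q$ contributes $D_\theta R[\partial_\theta q]_{-\pi/2^+}^{\pi/2^-}$. The diffusion of $p$ in the bulk contributes nothing by periodicity in $\theta$, provided $\partial_\theta p$ is continuous.

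The main obstacle is making the diffusion step rigorous. The jump conditions \eqref{eq:jump+}--\eqref{eq:jump-} show that $\partial_\theta(rp)$ is discontinuous across $\theta=\xi\pm\pi/2$ at $r=R$ through a $\delta_R(r)$ mass. The $p$-integral must therefore be understood distributionally, and integrating $D_\theta\partial^2_\theta(rp)$ across these lines produces the extra boundary contribution
\[
D_\theta R\Bigl(-\partial_\theta q\bigl(\xi,(\xi+\tfrac{\pi}{2})^-\bigr)+\partial_\theta q\bigl(\xi,(\xi-\tfrac{\pi}{2})^+\bigr)\Bigr).
\]
I would handle this by splitting the $\theta$-interval for $p$ into $(\xi-\pi/2,\xi+\pi/2)$ and $(\xi+\pi/2,\xi+3\pi/2)$. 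On each piece I integrate the diffusion term exactly, then sum the four endpoint values. The periodic endpoints cancel, and the interior endpoints leave precisely the jumps \eqref{eq:jump+}--\eqref{eq:jump-}, including their $\delta_R$ part after integration in $r$. These jump contributions cancel the diffusion flux of $q$ at its Dirichlet edges. Adding all contributions gives $M'(t)=0$, so $M$ is constant.
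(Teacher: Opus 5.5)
Your argument is correct and is essentially the paper's proof. The paper also splits the $\theta$-range at $\theta=\xi\pm\pi/2$, lets the radial outflux at $r=R$ cancel the source of the $q$-equation, and uses \eqref{eq:jump+}--\eqref{eq:jump-} to cancel the diffusive endpoint fluxes of $q$; the only difference is that you straighten the interfaces with $\eta=\theta-\xi$, whereas the paper stays in $(\xi,\theta)$ and treats the tangential transport by Fubini, producing boundary terms at $\xi=\theta\pm\pi/2$ that cancel between the two pieces. (Minor slip: your transformed $\xi$-transport term follows from the chain rule applied to the product $\sin(\theta-\xi)\,rp$, not from $\partial_\theta\sin(\theta-\xi)=0$, which is false, but the formula you obtain is right.)
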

\begin{proof}
    We split the $\theta$ domain into a left part for $\theta\in(\xi-\pi/2,\xi+\pi/2)$ and a right part for $\theta\in(\xi+\pi/2,\xi+3\pi/2)$ and denote the mass in each part to be $M_\mathrm{left}$ and $M_\mathrm{right}$ such that
    $$
    I_1=\int_0^R\int_{0}^{2\pi}\int_{\xi-\pi/2}^{\xi+\pi/2}rp(t,r,\xi,\theta)\dd\theta\dd\xi\dd r+\int_0^R\int_{0}^{2\pi}\int_{\xi+\pi/2}^{\xi+3\pi/2}rp(t,r,\xi,\theta)\dd\theta\dd\xi\dd r:=M_\mathrm{left}+M_\mathrm{right}.$$
    Integrating the PDE system (\ref{eq:PDEsystem}) in the sense of almost everywhere and applying the Fubini Theorem as well as using boundary conditions for $p$, we have
\begin{equation}
\begin{aligned}
    &\frac{\dd M_{\mathrm{left}}}{\dd t}+\int_0^{2\pi}\int_{\xi-\pi/2}^{\xi+\pi/2}V\cos(\theta-\xi)Rp(t,R,\xi,\theta)\dd\theta\dd\xi
    -\int_0^R\frac{V}{r}\int_{-\pi/2}^{3\pi/2}rp\left(t,r,\theta+\frac{\pi}{2},\theta\right)\dd\theta\dd r\\-&\int_0^R\frac{V}{r}\int_{\pi/2}^{5\pi/2}rp\left(t,r,\theta-\frac{\pi}{2},\theta\right)\dd\theta\dd r-D_\theta\int_0^R\int_0^{2\pi}\frac{\partial}{\partial\theta}(rp)\bigg|_{\theta=(\xi-\pi/2)^+}^{\theta=(\xi+\pi/2)^-}\dd\xi\dd r=0.
\end{aligned}
\end{equation}
Similarly, for $M_\mathrm{right}$, we have

\begin{equation}
\begin{aligned}
    &\frac{\dd M_{\mathrm{right}}}{\dd t}
    +\int_0^R\frac{V}{r}\int_{\pi/2}^{5\pi/2}rp\left(t,r,\theta-\frac{\pi}{2},\theta\right)\dd\theta\dd r+\int_0^R\frac{V}{r}\int_{3\pi/2}^{7\pi/2}rp\left(t,r,\theta-\frac{3\pi}{2},\theta\right)\dd\theta\dd r\\
    -&D_\theta\int_0^R\int_0^{2\pi}\frac{\partial}{\partial\theta}(rp)\bigg|_{\theta=(\xi+\pi/2)^+}^{\theta=(\xi+3\pi/2)^-}\dd\xi\dd r=0.
\end{aligned}
\end{equation}
By adding the two equations and using periodic boundary conditions (\ref{eq:periodicxi_of_p}), we obtain
\begin{equation}
\begin{aligned}
    &\frac{\dd I_1}{\dd t}+\int_0^{2\pi}\int_{\xi-\pi/2}^{\xi+\pi/2}V\cos(\theta-\xi)Rp(t,R,\xi,\theta)\dd\theta\dd\xi+D_\theta\int_0^R\int_0^{2\pi}\frac{\partial(rp)}{\partial\theta}\bigg|_{\theta=(\xi+\pi/2)^-}^{\theta=(\xi+\pi/2)^+}\dd\xi\dd r\\
    +&D_\theta\int_0^R\int_0^{2\pi}\frac{\partial(rp)}{\partial\theta}\bigg|_{\theta=(\xi-\pi/2)^-}^{\theta=(\xi-\pi/2)^+}\dd\xi\dd r=0.\label{eq:dMdt}
\end{aligned}
\end{equation}
Integrating both sides of the equation for $q(t,\xi,\theta)$ and using Fubini Theorem and the Dirichlet boundary condition (\ref{eq:Dirichlet_of_q}), we have
\begin{equation}
\begin{aligned}
    &\frac{\dd I_2}{\dd t}-D_\theta\int_0^{2\pi}\frac{\partial(Rq)}{\partial\theta}\left(t,\xi,\left(\xi+\frac{\pi}{2}\right)^-\right)\dd\xi+D_\theta\int_0^{2\pi}\frac{\partial(Rq)}{\partial\theta}\left(t,\xi,\left(\xi-\frac{\pi}{2}\right)^+\right)\dd\xi\\
    =&\int_0^{2\pi}\int_{\xi-\pi/2}^{\xi+\pi/2}V\cos(\theta-\xi)Rp(t,R,\xi,\theta)\dd\theta\dd\xi.
\end{aligned}\label{eq:dM+dt}
\end{equation}
Combining (\ref{eq:jump+}), (\ref{eq:jump-}), (\ref{eq:dMdt}) and (\ref{eq:dM+dt}), we obtain directly that $\dd M/\dd t=0$, indicating that the total mass remains constant in time.
\end{proof}

    \paragraph{Radial-symmetric Model.}    
     The experiment in \cite{ostapenko2018curvature} exhibits a probability density distribution that is radius symmetric. If the solution to the system (\ref{eq:PDEsystem}) is radially symmetric, the standard way is to assume that the solution is independent of $\xi$. However, as we can see the boundary conditions depend heavily on $\theta-\xi$, the standard assumption no longer works. What really affects the dynamics of the system (\ref{eq:PDEsystem}) is the difference between the orientation $\theta$ and the polar angle $\xi$. Thus we introduce a new variable $\eta=\theta-\xi$ to reduce the dimensionality of the system. For the radial-symmetric model, assume $2\pi p(t,r,\xi,\theta)=\tilde p(t,r,\eta)$ and $2\pi q(t,\xi,\theta)=\tilde q(t,\eta)$, where the factor $2\pi$ results from integrating out the rotational variable $\xi$. Then $\tilde p$ and $\tilde q$ satisfy:
    \begin{equation}
    \left\{
    \begin{aligned}
     &\frac{\partial(r\tilde p)}{\partial t}+V\cos\eta\frac{\partial(r\tilde p)}{\partial r}-\frac{V}{r}\frac{\partial\left(\sin\eta\,r\tilde p\right)}{\partial \eta}-D_\theta\frac{\partial^2(r\tilde p)}{\partial \eta^2}=0,\quad &&r\in[0,R],\\
    &\frac{\partial(R\tilde q)}{\partial t}-\frac{V}{R}\frac{\partial\left(\sin\eta\,R\tilde q\right)}{\partial \eta}-D_\theta\frac{\partial^2(R\tilde q)}{\partial \eta^2}=V\cos\eta\,R\tilde p(t,R,\eta),\quad &&\eta\in\left(-\frac{\pi}{2},\frac{\pi}{2}\right).
    \end{aligned}\label{eq:2dsystem}
    \right.
    \end{equation}
    The corresponding boundary conditions become
    \begin{equation}
    \left\{
    \begin{aligned}
    &\tilde p(t,r,\eta) = \tilde p(t,r,\eta+2\pi),\quad \frac{\partial }{\partial\eta}\tilde p(t,r,\eta) = \frac{\partial }{\partial\eta}\tilde p(t,r,\eta+2\pi),\\
    &\tilde p(t,R,\eta) = 0,\quad \eta\in\left(\frac{\pi}{2},\frac{3\pi}{2}\right),\quad \tilde p(t,0^+, \eta) = \tilde p(t,0^+,\eta+\pi),\quad \eta\in\left(-\frac{\pi}{2},\frac{\pi}{2}\right),\\
    &\tilde q\left(t,-\frac{\pi}{2}\right)=\tilde q\left(t,\frac{\pi}{2}\right)=0,\\
    &\frac{\partial}{\partial\eta}(r\tilde p)\bigg|_{(\pi/2)^-}^{(\pi/2)^+}=\frac{\partial}{\partial\eta}(R\tilde q)\left(t,(\frac{\pi}{2})^-\right)\delta_R(r),\\
    &\frac{\partial}{\partial\eta}(r\tilde p)\bigg|_{(-\pi/2)^-}^{(-\pi/2)^+}=-\frac{\partial}{\partial\eta}(R\tilde q)\left(t,(-\frac{\pi}{2})^+\right)\delta_R(r).
    \end{aligned}
    \right.
    \end{equation}

\section{Models in Annular Domains}\label{ch:3}
\subsection{The Individual-Based Model}\label{ch:3.1}
For self-propelled particles, it has already been observed experimentally that the accumulation highly depends on the curvature of the boundary: when the curvature is concave, the particle is more likely to accumulate; when the curvature is convex, it is harder for the particle to accumulate \cite{cammann2021emergent,zahra2021active}. To understand this, we present in this section a model for particles confined in an annular domain. The annular domain has two boundaries of different curvatures, resulting in distinct particle behaviors along the inner and outer boundaries. 
Let the radii of the inner and outer boundaries be $R_\mathrm{in}$ and $R_\mathrm{out}$ respectively. We still use polar coordinates $(r_t,\xi_t)$ to represent the position of a particle at time $t$ and use $\theta_t$ to describe its orientation at the same time.

Fixing the particle orientation at $\theta_t=\pi/4$ and neglecting rotational diffusion, particles confined by the inner boundary will slide along the boundary in the directions plotted in Figure \ref{fig:annulardemonstration} (a). As $\xi_t$ changes with time, particles can only leave the boundary when $\xi_t=3\pi/4$ or $\xi_t=7\pi/4$, whereas the dynamics on the outer boundary are identical to those in the circular case, in which the particle cannot leave the boundary if the orientation does not change. 
\begin{figure}[!tbh]
    \centering
    \begin{tikzpicture}[>=Stealth]
    \begin{scope}[shift={(0,0)}]
    \draw[->] (-2.8,0) -- (2.8,0) node[right] {$x$};
    \draw[->] (0,-2.8) -- (0,2.8) node[right] {$y$};
    \draw (0,0) circle (2);
    \node[below] at (2.25,0) {$R_\mathrm{in}$};
    \draw[dashed] (0,0) -- (-1.414,1.414);
    \draw[dashed] (0,0) -- (1.414,-1.414);
    \draw[dashed] (0,0) -- (-1.414,-1.414);
    \draw[dashed] (-2.014,-2.014) -- (-1.328,-2.014);
    \draw[->] (-2.014,-2.014) -- (-1.414,-1.414);
    \draw[->] (-1.414,1.414) -- (-0.814,2.014);
    \draw[->] (1.414,-1.414) -- (2.014,-0.814);
    \draw[->]
    (-132:2.2)
    arc[start angle=-132,end angle=-45,radius=2.2];
    \draw[->]
    (-138:2.2)
    arc[start angle=-138,end angle=-225,radius=2.2];
    \draw (-2.014,-2.014) ++(0:0.3)
        arc[start angle=0,end angle=45,radius=0.3];
    \node at (-2.014+0.4,-1.85) {\small$\theta$};
    \end{scope}
    \begin{scope}[shift={(7,0)}]
    \draw[->] (-2.8,0) -- (2.8,0) node[right] {$x$};
    \draw[->] (0,-2.8) -- (0,2.8) node[right] {$y$};
    \draw (0,0) circle (2);
    \node[below] at (2.35,0) {$R_\mathrm{out}$};
    \draw[dashed] (0,0) -- (-1.414,1.414);
    \draw[dashed] (0,0) -- (1.414,-1.414);
    \draw[dashed] (0,0) -- (1.414,1.414);
    \draw[dashed] (-0.9473,1.3696) -- (-0.3,1.3696);
    \draw[->] (-0.9473,1.3696) -- (-0.3473,1.9696);
    \draw[->] (1.3696,-0.9473) -- (1.9696,-0.3473);
    \draw[->] (0.814,0.814) -- (1.414,1.414);

    \draw[->]
    (100:1.8)
    arc[start angle=100,end angle=48,radius=1.8];
    \draw[->]
    (-10:1.8)
    arc[start angle=-10,end angle=42,radius=1.8];

    \draw (-0.9473,1.3696) ++(0:0.3)
        arc[start angle=0,end angle=45,radius=0.3];
    \node at (-0.9473+0.4,1.3696+0.2) {\small$\theta$};
    \end{scope}
    \end{tikzpicture}
    \caption{When rotational diffusion is neglected, a particle with orientation $\theta=\pi/4$ first collides with the boundary and then moves along it. The particle can leave the inner boundary only when $\cos(\theta-\xi)=0$, but it cannot leave the outer boundary.}
    \label{fig:annulardemonstration}
\end{figure}

 When the particle moves freely inside the domain, the governing equation remains the same as for the circular case as illustrated in Figure \ref{fig:illustration_of_movement}. When the particle is blocked by the rigid boundaries,  $\dd r_t$ becomes $0$, while $\xi_t$ and $\theta_t$ change in the same way as when the particle is at free moving state. Therefore, the corresponding model is described by the following piecewise-deterministic Markov process \cite{davis1984piecewise} which reads
\begin{equation}
\left\{
\begin{aligned}
\dd r_t &= 
\Biggl\{
\begin{aligned}
& V \cos(\theta_t-\xi_t)\,\dd t, && (r_t,\xi_t,\theta_t)\in\Omega_{(r,\xi,\theta)},\\
& 0, && \text{otherwise,}
\end{aligned}
\Biggr.\\
\dd \xi_t &= \arctan\left(\frac{V\sin(\theta_t-\xi_t)\,\dd t}{V\cos(\theta_t-\xi_t)\,\dd t + r_t}\right),\\
\dd \theta_t &= \sqrt{2 D_\theta}\,\dd B_t,
\end{aligned}\label{eq:annularSDE}
\right.
\end{equation}
where $\Omega_{(r,\xi,\theta)} = \{ R_\mathrm{in}< r_t < R_\mathrm{out}\}\cup \{ r_t=R_\mathrm{out},\, \cos(\theta_t-\xi_t)<0 \}\cup \{ r_t=R_\mathrm{in},\, \cos(\theta_t-\xi_t)>0\}$.
Here the main difference is the physical domain. If we consider only the mathematical forms, the individual-based model \eqref{eq:annularSDE} has a form similar to \eqref{eq:circularSDE} for the circular domain. It is difficult to see the difference between concave and convex boundaries, since the difference is embedded in $\Omega_{(r,\xi,\theta)} $. Different boundaries have different ways to define motion toward or away from each boundary. 

\subsection{The Fokker-Planck Model}\label{ch:3.2}
We use $p(t,r,\xi,\theta)$ to describe the probability density in the interior of the domain at time $t$, position $(r,\xi)$ in polar coordinates and orientation $\theta$. As a new boundary is introduced, we use $q_\mathrm{out}(t,\xi,\theta)$ to denote the probability density at the outer boundary $\{r=R_\mathrm{out}\}$ of the annulus and introduce $q_\mathrm{in}(t,\xi,\theta)$ to denote the probability density at the inner boundary $\{r = R_\mathrm{in}\}$. The Fokker-Planck system then reads
\begin{equation}
\left\{
\begin{aligned}\label{eq:annularPDE}
    &\frac{\partial(rp)}{\partial t}+V\cos(\theta-\xi)\frac{\partial(rp)}{\partial r}
    +\frac{V}{r}\frac{\partial\left[\sin(\theta-\xi)rp\right]}{\partial \xi}
    -D_\theta\frac{\partial^2(rp)}{\partial \theta^2}=0,\quad r\in[R_\mathrm{in},R_\mathrm{out}],\\
    &\frac{\partial(R_\mathrm{out}q_\mathrm{out})}{\partial t}
    +\frac{V}{R_\mathrm{out}}\frac{\partial\left[\sin(\theta-\xi)R_\mathrm{out}q_\mathrm{out}\right]}{\partial \xi}
    -D_\theta\frac{\partial^2(R_\mathrm{out}q_\mathrm{out})}{\partial \theta^2}
    =V\cos(\theta-\xi)R_\mathrm{out}p(t,R_\mathrm{out},\xi,\theta),\\
    &\phantom{\frac{\partial(rp)}{\partial t}+V\cos(\theta-\xi)\frac{\partial(rp)}{\partial r}
    +\frac{V}{r}\frac{\partial\left[\sin(\theta-\xi)rp\right]}{\partial \xi}
    -D_\theta\frac{\partial^2(rp)}{\partial \theta^2}=0,}\quad \theta\in\left(\xi-\frac{\pi}{2},\xi+\frac{\pi}{2}\right),\\
    &\frac{\partial(R_\mathrm{in}q_\mathrm{in})}{\partial t}
    +\frac{V}{R_\mathrm{in}}\frac{\partial\left[\sin(\theta-\xi)R_\mathrm{in}q_\mathrm{in}\right]}{\partial \xi}
    -D_\theta\frac{\partial^2(R_\mathrm{in}q_\mathrm{in})}{\partial \theta^2}
    =-V\cos(\theta-\xi)R_\mathrm{in}p(t,R_\mathrm{in},\xi,\theta),\\
    &\phantom{\frac{\partial(rp)}{\partial t}+V\cos(\theta-\xi)\frac{\partial(rp)}{\partial r}
    +\frac{V}{r}\frac{\partial\left[\sin(\theta-\xi)rp\right]}{\partial \xi}
    -D_\theta\frac{\partial^2(rp)}{\partial \theta^2}=0,}\quad \theta\in\left(\xi+\frac{\pi}{2},\xi+\frac{3\pi}{2}\right).
\end{aligned}
\right.
\end{equation}
The coefficients in the PDE system above all share the same physical interpretations as the corresponding terms in (\ref{eq:PDEsystem}) for circular domains. More specifically, $V\cos(\theta-\xi)$ and $V\sin(\theta-\xi)$ represent the radial and tangential components of the velocity $V$. Tangential velocity is converted into angular velocity so $r$ appears in the denominator. The second-order derivative of $\theta$ takes into account the diffusion process in the orientation variable with strength $D_\theta$. The transition from free-swimming state to boundary-contact state is depicted by the source terms on the right-hand side of the equations for $R_\mathrm{out}q_\mathrm{out}$ and $R_\mathrm{in}q_\mathrm{in}$. 

The first two equations are the same as for the circular case, while the main difference is the additional one for the inner boundary, which can be considered as an obstacle inside the domain. For $q_\mathrm{in}(t,\xi,\theta)$, $\theta$ is defined only inside the interval $\left(\xi+\frac{\pi}{2},\xi+\frac{3\pi}{2}\right)$. This is because only inside this interval $\cos(\theta-\xi)<0$ and the orientation of the particle is toward the boundary, which is different from the outer boundary. Moreover, a negative sign appears on the right-hand side of the equation for $q_\mathrm{in}(t,\xi,\theta)$. It is easy to check that when $\theta\in\left(\xi+\frac{\pi}{2},\xi+\frac{3\pi}{2}\right)$, $\cos(\theta-\xi)<0$, thus the source term of $q_\mathrm{in}(t,\xi,\theta)$ remains positive, which takes into account the flux of those free-swimming particles that move toward the obstacle.

The Fokker-Planck model (\ref{eq:annularPDE}) admits similarities in the formulation of boundary conditions. One major difference, which is illustrated by Figure \ref{fig:annulardemonstration}, is that there are four critical points $(R_{\mathrm{in}},\xi,\xi\pm\pi/2)$ and $(R_{\mathrm{out}},\xi,\xi\pm\pi/2)$  instead of two. The boundary conditions are as follows.

\begin{itemize}[leftmargin=*]
    \item \textbf{Boundary conditions for $p(t,r,\xi,\theta)$.}
        
       The Dirichlet boundary condition analogous to (\ref{eq:circular0dirichlet}) is 
    \begin{equation}\label{eq:outer0boundary}
    p(t,R_\mathrm{out},\xi,\theta)=0,\quad \cos(\theta-\xi)<0,
    \end{equation}
     While for the inner boundary, since particles will leave the boundary immediately when its orientation points into the annular domain with $\cos(\theta-\xi)>0$, the corresponding boundary condition is
    \begin{equation}\label{eq:inner0boundary}
        p(t,R_\mathrm{in},\xi,\theta)=0,\quad \cos(\theta-\xi)>0.
    \end{equation}
    These two boundary conditions indicate that no particles will be at the boundaries when their velocities point into the domain. The periodic boundary conditions for $p$ in $\xi$ and $\theta$ from Section \ref{ch:2} remain unchanged:
    \begin{equation}
    \begin{aligned}
        &p(t,r,\xi,\theta)= p(t,r,\xi+2\pi,\theta),\\
        &p(t,r,\xi,\theta)=p(t,r,\xi,\theta+2\pi),\quad \frac{\partial}{\partial\theta} p(t,r,\xi,\theta)=\frac{\partial}{\partial\theta} p(t,r,\xi,\theta+2\pi).
    \end{aligned}
    \end{equation}

    \item \textbf{Boundary conditions for $q_\mathrm{out}(t,\xi,\theta)$ and $q_\mathrm{in}(t,\xi,\theta)$.}

    For $q_\mathrm{out}$ and $q_\mathrm{in}$, we continue to use periodic boundary conditions in $\xi$ direction and the Dirichlet conditions in $\theta$ direction. This is because the polar angle $\xi$ also admits periodicity in the annular domains as in the circular case. Additionally, particles leave the outer circle at $(t, R_\mathrm{out},\xi,\xi-\pi/2)$ and $(t, R_\mathrm{out},\xi,\xi+\pi/2)$, while they leave the inner circle at $(t, R_\mathrm{in},\xi,\xi+\pi/2)$ and $(t, R_\mathrm{in},\xi,\xi+3\pi/2)$. The corresponding boundary conditions read
    \begin{equation}
    \begin{aligned}
        &q_\mathrm{out}(t,\xi,\theta)=q_\mathrm{out}(t,\xi+2\pi,\theta),\quad &q_\mathrm{out}\left(t,\xi,\xi-\frac{\pi}{2}\right)=q_\mathrm{out}\left(t,\xi,\xi+\frac{\pi}{2}\right)=0,\\
        &q_\mathrm{in}(t,\xi,\theta)=q_\mathrm{in}(t,\xi+2\pi,\theta),\quad &q_\mathrm{in}\left(t,\xi,\xi+\frac{\pi}{2}\right)=q_\mathrm{in}\left(t,\xi,\xi+\frac{3\pi}{2}\right)=0.
    \end{aligned}  
    \end{equation}
   
    \item \textbf{Jump conditions at the transition interface.}
    
    To characterize the transition between free-swimming state inside the domain and boundary-contact state along the two boundaries, we need four boundary conditions at $(R_\mathrm{out}, \xi,\xi-\pi/2)$, $(R_\mathrm{out}, \xi,\xi+\pi/2)$, $(R_\mathrm{in}, \xi,\xi+\pi/2)$ and $(R_\mathrm{in}, \xi,\xi+3\pi/2)$ since these are the points where the radial component of the velocity vanishes. We integrate the system (\ref{eq:annularPDE}) as for the circular case and use the boundary conditions stated above to simplify the equations. The calculations are similar to those in Section \ref{ch:2.2} so the details are omitted here. As before, the boundary conditions are formulated in the sense of distributions and read
    \begin{equation}
    \left\{
    \begin{aligned}
    \frac{\partial}{\partial \theta}(rp)\bigg|_{\theta=(\xi-\pi/2)^-}^{\theta=(\xi-\pi/2)^+}
    =&
    -\frac{\partial}{\partial\theta}(R_\mathrm{out}q_\mathrm{out})\left(t,\xi,(\xi-\frac{\pi}{2})^+\right)\delta_{R_\mathrm{out}}(r)\\
    &+\frac{\partial}{\partial\theta}(R_\mathrm{in}q_\mathrm{in})\left(t,\xi,(\xi+\frac{3\pi}{2})^-\right)\delta_{R_\mathrm{in}}(r),\\
    \frac{\partial}{\partial \theta}(rp)\bigg|_{\theta=(\xi+\pi/2)^-}^{\theta=(\xi+\pi/2)^+}
    =&
    \frac{\partial}{\partial\theta}(R_\mathrm{out}q_\mathrm{out})\left(t,\xi,(\xi+\frac{\pi}{2})^-\right)\delta_{R_\mathrm{out}}(r)\\
    &-\frac{\partial}{\partial\theta}(R_\mathrm{in}q_\mathrm{in})\left(t,\xi,(\xi+\frac{\pi}{2})^+\right)\delta_{R_\mathrm{in}}(r).
    \end{aligned}\label{eq:annularjump}
    \right.
    \end{equation}
\end{itemize}

\paragraph{Mass Conservation.}
    The Fokker-Planck systems (\ref{eq:annularPDE}), together with the boundary conditions (\ref{eq:outer0boundary})-(\ref{eq:annularjump}), conserve the total mass, as stated in the following theorem.
    \begin{theorem}
        Consider the PDE system (\ref{eq:annularPDE}) with nonnegative bounded initial data
        \begin{equation}
            p(0,r,\xi,\theta) = p_0(r,\xi,\theta),\quad q_\mathrm{out}(0,\xi,\theta) = q_0^\mathrm{out}(\xi,\theta),\quad q_\mathrm{in}(0,\xi,\theta) = q_0^\mathrm{in}(\xi,\theta).
        \end{equation}
        Define the probability mass in the domain and along the boundaries to be
        \begin{equation}
    \begin{aligned}
    &M(t)= \int_{R_\mathrm{in}}^{R_\mathrm{out}}\int_{0}^{2\pi}\int_{\xi-\pi/2}^{\xi+3\pi/2}rp(t,r,\xi,\theta)\dd\theta\dd\xi\dd r,\\
    &M_\mathrm{out}(t)=\int_0^{2\pi}\int_{\xi-\pi/2}^{\xi+\pi/2}R_\mathrm{out}q_\mathrm{out}(t,\xi,\theta)\dd\theta\dd\xi,\\
     &M_\mathrm{in}(t)=\int_0^{2\pi}\int_{\xi+\pi/2}^{\xi+3\pi/2}R_\mathrm{in}q_\mathrm{in}(t,\xi,\theta)\dd\theta\dd\xi,
    \end{aligned}\label{eq:mass}
    \end{equation}
    respectively. Then the total mass $M(t)+M_\mathrm{out}(t)+M_\mathrm{in}(t)$ is a constant in time.
    \end{theorem}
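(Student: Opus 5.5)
We follow the proof of Theorem \ref{th:massconservation}. The plan is to differentiate each of $M$, $M_\mathrm{out}$, $M_\mathrm{in}$ in time, using the equations in (\ref{eq:annularPDE}) integrated almost everywhere together with Fubini's theorem. Every transport and diffusion term then becomes a boundary term, and we show that these boundary terms cancel exactly through the Dirichlet conditions and the jump conditions (\ref{eq:annularjump}).

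\textbf{Interior mass.} For $M$ we split the $\theta$-interval $(\xi-\pi/2,\xi+3\pi/2)$ into the part where $\cos(\theta-\xi)>0$ and the part where $\cos(\theta-\xi)<0$. On each part we integrate the equation for $rp$.

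The radial transport term integrates to $V\cos(\theta-\xi)\,rp$ evaluated between $R_\mathrm{in}$ and $R_\mathrm{out}$. The conditions (\ref{eq:outer0boundary}) and (\ref{eq:inner0boundary}) remove the incoming halves, so this term leaves exactly two outflux terms:
\begin{equation*}
\int_0^{2\pi}\int_{\xi-\pi/2}^{\xi+\pi/2}V\cos(\theta-\xi)R_\mathrm{out}p(t,R_\mathrm{out},\xi,\theta)\dd\theta\dd\xi
\end{equation*}
and
\begin{equation*}
-\int_0^{2\pi}\int_{\xi+\pi/2}^{\xi+3\pi/2}V\cos(\theta-\xi)R_\mathrm{in}p(t,R_\mathrm{in},\xi,\theta)\dd\theta\dd\xi,
\end{equation*}
and both are nonnegative.

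For the $\xi$-transport term, the integration limits in $\theta$ depend on $\xi$. As in the circular proof, we change variables, or equivalently integrate over the full period and use $\xi$-periodicity. The moving-endpoint contributions from the two halves then cancel when the halves are added, exactly as the $\frac{V}{r}\int rp(t,r,\theta\mp\pi/2,\theta)$ terms did in Theorem \ref{th:massconservation}. Unlike the disc case, the domain does not contain $r=0$, so no origin compatibility condition is needed.

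The diffusion term leaves the jumps of $\partial_\theta(rp)$ across $\theta=\xi\pm\pi/2$, integrated in $r$ and $\xi$; the $2\pi$-periodicity in $\theta$ closes the remaining endpoints.

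\textbf{Boundary masses.} For $M_\mathrm{out}$ and $M_\mathrm{in}$ we integrate the corresponding equations over $\theta\in(\xi-\pi/2,\xi+\pi/2)$ and $\theta\in(\xi+\pi/2,\xi+3\pi/2)$ respectively.

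The $\xi$-transport terms vanish. This uses periodicity in $\xi$ together with the Dirichlet conditions on $q_\mathrm{out}$ and $q_\mathrm{in}$, which kill the moving-endpoint terms.

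The diffusion terms give $-D_\theta\,\partial_\theta(R q)$ evaluated at the two endpoints of each interval. The right-hand sides reproduce the two outflux terms from the interior computation with the opposite sign, so the flux terms cancel in $\frac{\dd}{\dd t}(M+M_\mathrm{out}+M_\mathrm{in})$.

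\textbf{Cancellation of the derivative jumps.} What remains is to match the jumps of $\partial_\theta(rp)$ against the boundary derivatives of $q_\mathrm{out}$ and $q_\mathrm{in}$. We interpret (\ref{eq:annularjump}) distributionally: integrating in $r$ over $[R_\mathrm{in},R_\mathrm{out}]$ turns the delta masses at $R_\mathrm{in}$ and $R_\mathrm{out}$ into point evaluations. At $\theta=\xi+\pi/2$ this gives $D_\theta$ times
\begin{equation*}
\partial_\theta(R_\mathrm{out}q_\mathrm{out})\big|_{(\xi+\pi/2)^-}-\partial_\theta(R_\mathrm{in}q_\mathrm{in})\big|_{(\xi+\pi/2)^+}.
\end{equation*}
This is exactly the negative of the corresponding diffusion boundary terms from $M_\mathrm{out}$ and $M_\mathrm{in}$.

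The endpoint $\theta=\xi+3\pi/2$ of $q_\mathrm{in}$ is identified with $\xi-\pi/2$ by $2\pi$-periodicity. It therefore pairs with the first jump condition, together with the $q_\mathrm{out}$ term at $(\xi-\pi/2)^+$.

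\textbf{Main obstacle.} The main difficulty is careful sign and orientation bookkeeping. The inner boundary has the reversed sign on its source term and a shifted $\theta$-interval, so every endpoint term must be checked against the periodic identification $\xi+3\pi/2\equiv\xi-\pi/2$. I would carry this out by first listing all six endpoint contributions at $\theta=\xi\pm\pi/2$ in a table and then verifying that they sum to zero.
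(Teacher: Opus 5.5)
Your proposal is correct and takes the route the paper intends. The paper omits this proof as analogous to Theorem \ref{th:massconservation}, and you carry out that analogy with the right signs: you split the $\theta$-period at $\xi\pm\pi/2$, use (\ref{eq:outer0boundary})--(\ref{eq:inner0boundary}) to reduce the radial terms to the two outfluxes, and match the jump conditions (\ref{eq:annularjump}), with $\xi+3\pi/2\equiv\xi-\pi/2$, against the endpoint derivatives of $q_\mathrm{out}$ and $q_\mathrm{in}$. Like the paper, you give $\delta_{R_\mathrm{in}}$ and $\delta_{R_\mathrm{out}}$ their full unit mass when integrating over the closed interval $[R_\mathrm{in},R_\mathrm{out}]$, which is the convention this balance requires.
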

    The proof is analogous to that of Theorem \ref{th:massconservation} and is therefore omitted here.

\paragraph{Radial-symmetric Model.}
    To derive the radial-symmetric model for an annular domain, we again introduce $\eta=\theta-\xi$ and absorb the eliminated angular coordinate into the reduced densities by setting $\tilde p(t,r,\eta)=2\pi p(t,r,\xi,\theta)$, $\tilde q_{\mathrm{out}}(t,\eta)=2\pi q_{\mathrm{out}}(t,\xi,\theta)$ and $\tilde q_{\mathrm{in}}(t,\eta)=2\pi q_{\mathrm{in}}(t,\xi,\theta)$. As in the circular-domain case, the factor $2\pi$ accounts for the integration over $\xi$. The reduced system then becomes
    \begin{equation}
    \left\{
    \begin{aligned}\label{eq:annularisotropicPDE}
    &\frac{\partial(r\tilde p)}{\partial t}+V\cos\eta\frac{\partial(r\tilde p)}{\partial r}
    -\frac{V}{r}\frac{\partial(\sin\eta\, r\tilde p)}{\partial \eta}
    -D_\theta\frac{\partial^2(r\tilde p)}{\partial \eta^2}=0,\quad r\in[R_\mathrm{in},R_\mathrm{out}],\\
    &\frac{\partial(R_\mathrm{out}\tilde q_\mathrm{out})}{\partial t}
    -\frac{V}{R_\mathrm{out}}\frac{\partial(\sin\eta\,R_\mathrm{out}\tilde q_\mathrm{out})}{\partial \eta}
    -D_\theta\frac{\partial^2(R_\mathrm{out}\tilde q_\mathrm{out})}{\partial \eta^2}
    =V\cos\eta\,R_\mathrm{out}\tilde p(t,R_\mathrm{out},\eta),\\
    &\phantom{\frac{\partial(R_\mathrm{in}\tilde q_\mathrm{in})}{\partial t}
    -\frac{V}{R_\mathrm{in}}\frac{\partial(\sin\eta\,R_\mathrm{in}\tilde q_\mathrm{in})}{\partial \eta}
    -D_\theta\frac{\partial^2(R_\mathrm{in}\tilde q_\mathrm{in})}{\partial \eta^2}
    =-V\cos\eta\,R_\mathrm{in}\tilde p(t,R_\mathrm{in},\eta),}\quad \eta\in\left(-\frac{\pi}{2},\frac{\pi}{2}\right),\\
    &\frac{\partial(R_\mathrm{in}\tilde q_\mathrm{in})}{\partial t}
    -\frac{V}{R_\mathrm{in}}\frac{\partial(\sin\eta\,R_\mathrm{in}\tilde q_\mathrm{in})}{\partial \eta}
    -D_\theta\frac{\partial^2(R_\mathrm{in}\tilde q_\mathrm{in})}{\partial \eta^2}
    =-V\cos\eta\,R_\mathrm{in}\tilde p(t,R_\mathrm{in},\eta),\quad \eta\in\left(\frac{\pi}{2},\frac{3\pi}{2}\right).
    \end{aligned}
    \right.
\end{equation}
The corresponding boundary conditions are
\begin{equation}
\left\{
\begin{aligned}
    &\tilde p(t,r,\eta) = \tilde p(t,r,\eta+2\pi),\quad \frac{\partial }{\partial\eta}\tilde p(t,r,\eta) = \frac{\partial }{\partial\eta}\tilde p(t,r,\eta+2\pi),\\
    &\tilde p(t,R_\mathrm{out},\eta) = 0,\quad \eta\in\left(\frac{\pi}{2},\frac{3\pi}{2}\right),\quad \tilde p(t,R_\mathrm{in},\eta) = 0,\quad \eta\in\left(-\frac{\pi}{2},\frac{\pi}{2}\right),\\
    &\tilde q_\mathrm{out}\left(t,-\frac{\pi}{2}\right)=\tilde q_\mathrm{out}\left(t,\frac{\pi}{2}\right)=0,\quad \tilde q_\mathrm{in}\left(t,\frac{\pi}{2}\right)=\tilde q_\mathrm{in}\left(t,\frac{3\pi}{2}\right)=0,\\
    &\frac{\partial}{\partial\eta}(r\tilde p)\bigg|_{(-\pi/2)^-}^{(-\pi/2)^+}=-\frac{\partial}{\partial\eta}(R_\mathrm{out}\tilde q_\mathrm{out})\left(t,(-\frac{\pi}{2})^+\right)\delta_{R_\mathrm{out}}(r)+\frac{\partial}{\partial\eta}(R_\mathrm{in}\tilde q_\mathrm{in})\left(t,(\frac{3\pi}{2})^-\right)\delta_{R_\mathrm{in}}(r),\\
    &\frac{\partial}{\partial\eta}(r\tilde p)\bigg|_{(\pi/2)^-}^{(\pi/2)^+}=\frac{\partial}{\partial\eta}(R_\mathrm{out}\tilde q_\mathrm{out})\left(t,(\frac{\pi}{2})^-\right)\delta_{R_\mathrm{out}}(r)-\frac{\partial}{\partial\eta}(R_\mathrm{in}\tilde q_\mathrm{in})\left(t,(\frac{\pi}{2})^+\right)\delta_{R_\mathrm{in}}(r).
    \end{aligned}
    \right.\label{radialsymmetricbound}
\end{equation}

\section{Numerical Discretization}\label{ch:4}

\subsection{Discretization of the PDE Model}\label{ch:4.3}
The radial-symmetric models in annular domains \eqref{eq:annularisotropicPDE}-\eqref{radialsymmetricbound} are used to demonstrate the numerical discretization in this section. For circular domains, similar discretizations can be used in a straightforward way. We adopt an upwind method in radial direction and an exponentially weighted conservative difference scheme in angular direction. An implicit time discretization is employed to ensure the stability of the scheme. For simplicity of the notations, we henceforth omit the tildes and, without relabeling, use $p$, $q_\mathrm{out}$, and $q_\mathrm{in}$ for $\tilde p$, $\tilde q_\mathrm{out}$, and $\tilde q_\mathrm{in}$, respectively.

Let $N_r$ be a positive integer and $N_\eta$ a positive even integer. We introduce the uniform mesh
\begin{equation}
\begin{aligned}
    &\Delta r=\frac{R_{\mathrm{out}}-R_{\mathrm{in}}}{N_r},\quad
    &&r_i=R_{\mathrm{in}}+\left(i-\frac12\right)\Delta r,\quad
    &i=1,\ldots,N_r,\\
    &\Delta\eta=\frac{2\pi}{N_\eta},\quad
    &&\eta_j=-\frac{\pi}{2}+\left(j-\frac12\right)\Delta\eta,\quad
    &j=1,\ldots,N_\eta.
\end{aligned}
\end{equation}
    The cell interfaces are located at $r_{i\pm1/2}=r_i\pm\Delta r/2$ and $\eta_{j\pm1/2}=\eta_j\pm\Delta\eta/2$. As illustrated in Figure \ref{fig:annulargrid}, the angular nodes on the outer and inner boundaries are
\begin{equation}
\eta_{\mathrm{out},m}=\eta_m,\quad\eta_{\mathrm{in},m}=\eta_{N_\eta/2+m},\quad m=1,\ldots,N_\eta/2.
\end{equation}

\begin{figure}[htbp]
\centering
\begin{tikzpicture}[x=0.8cm,y=0.65cm,>=stealth,font=\small]
    \draw[step=1,gray!30,very thin] (0,0) grid (6,8);
    \draw[black,thick] (0,0) rectangle (6,8);
    \draw[dashed,gray!75] (0,4) -- (6,4);
    \draw[blue!70!black,very thick] (0,4) -- (0,8);
    \draw[red!75!black,very thick] (6,0) -- (6,4);
    \foreach \i in {0,...,5} {
        \foreach \j in {0,...,7} {
            \fill (\i+0.5,\j+0.5) circle (1.25pt);
        }
    }
    \foreach \j in {0,...,3} {
        \fill[red!75!black] (6,\j+0.5) circle (2.1pt);
        \fill[blue!70!black] (0,\j+4.5) circle (2.1pt);
    }
    \foreach \y in {4,8} {
        \draw[blue!70!black,thick,fill=white] (0,\y) circle (2.1pt);
    }
    \foreach \y in {0,4} {
        \draw[red!75!black,thick,fill=white] (6,\y) circle (2.1pt);
    }
    \node[left=5pt] at (0,0) {$-\pi/2$};
    \node[left=5pt] at (0,4) {$\pi/2$};
    \node[left=5pt] at (0,8) {$3\pi/2$};
    \node[below=5pt] at (0,0) {$R_{\mathrm{in}}$};
    \node[below=5pt] at (6,0) {$R_{\mathrm{out}}$};
    \draw[->,thick] (0.5,-0.85) -- (5.5,-0.85);
    \node[below=3pt] at (0.5,-0.85) {$1$};
    \node[below=3pt] at (5.5,-0.85) {$N_r$};
    \draw[->,thick] (-1.65,0.5) -- (-1.65,7.5);
    \node[left=4pt] at (-1.65,0.5) {$1$};
    \node[left=4pt] at (-1.65,7.5) {$N_\eta$};
\end{tikzpicture}
\caption{Locations of the discrete unknowns in the $(r,\eta)$ domain. Black dots represent $p_{ij}$ at the interior cell centers $(r_i,\eta_j)$, red dots represent $q_{\mathrm{out},m}$ at $(R_{\mathrm{out}},\eta_m)$, and blue dots represent $q_{\mathrm{in},m}$ at $(R_{\mathrm{in}},\eta_{N_\eta/2+m})$. Open circles mark the endpoints where the density is given by homogeneous Dirichlet boundary condition and are not additional unknowns.}
\label{fig:annulargrid}
\end{figure}

At time $t^n=n\Delta t$, the discrete unknowns approximate the cell averages of the Jacobian-weighted densities:
\begin{equation}
\begin{aligned}
    &p_{ij}^n\approx\frac{1}{\Delta r\Delta\eta}\int_{r_{i-1/2}}^{r_{i+1/2}}\int_{\eta_{j-1/2}}^{\eta_{j+1/2}}r p(t^n,r,\eta)\,\mathrm d\eta\,\mathrm dr,\\
    &q_{\mathrm{out},m}^n\approx\frac{1}{\Delta\eta}\int_{\eta_{m-1/2}}^{\eta_{m+1/2}}R_{\mathrm{out}}q_{\mathrm{out}}(t^n,\eta)\,\mathrm d\eta,\qquad
    q_{\mathrm{in},m}^n\approx\frac{1}{\Delta\eta}\int_{\eta_{N_\eta/2+m-1/2}}^{\eta_{N_\eta/2+m+1/2}}R_{\mathrm{in}}q_{\mathrm{in}}(t^n,\eta)\,\mathrm d\eta.
\end{aligned}
\label{eq:fdunknowns}
\end{equation}
Then the fully discrete scheme reads
\begin{equation}
\left\{
\begin{aligned}
    &\frac{p_{ij}^{n+1}-p_{ij}^{n}}{\Delta t}+V\cos\eta_j\frac{p_{ij}^{n+1}-p_{i-1,j}^{n+1}}{\Delta r}-D_\theta\frac{F_{i,j+1/2}^{n+1}-F_{i,j-1/2}^{n+1}}{\Delta\eta}=S_{ij}^{n+1},
    &&j=1,\ldots,N_\eta/2,\\
    &\frac{p_{ij}^{n+1}-p_{ij}^{n}}{\Delta t}+V\cos\eta_j\frac{p_{i+1,j}^{n+1}-p_{ij}^{n+1}}{\Delta r}-D_\theta\frac{F_{i,j+1/2}^{n+1}-F_{i,j-1/2}^{n+1}}{\Delta\eta}=S_{ij}^{n+1},
    &&j=N_\eta/2+1,\ldots,N_\eta,\\
    &\frac{q_{\mathrm{out},m}^{n+1}-q_{\mathrm{out},m}^{n}}{\Delta t}-D_\theta\frac{G_{\mathrm{out},m+1/2}^{n+1}-G_{\mathrm{out},m-1/2}^{n+1}}{\Delta\eta}
    =V\cos\eta_m\,p_{N_r,m}^{n+1},
    &&m=1,\ldots,N_\eta/2,\\
    &\frac{q_{\mathrm{in},m}^{n+1}-q_{\mathrm{in},m}^{n}}{\Delta t}
    -D_\theta\frac{G_{\mathrm{in},m+1/2}^{n+1}-G_{\mathrm{in},m-1/2}^{n+1}}{\Delta\eta}=-V\cos\eta_{N_\eta/2+m}\,p_{1,N_\eta/2+m}^{n+1},
    &&m=1,\ldots,N_\eta/2.
\end{aligned}
\right.
\label{eq:numericalscheme}
\end{equation}
In \eqref{eq:numericalscheme}, the first two equations hold for \(i=1,\ldots,N_r\). The difference between the first two equations in \eqref{eq:numericalscheme} lies in the approximation of the advection term in the \(r\)-direction, which is determined by the sign of \(\cos\eta_j\).
The flux appearing in these two equations is defined by
\begin{equation}
    F_{i,j+1/2}^{n+1}
    = e^{\frac{V\cos\eta_{j+1/2}}{r_iD_\theta}}
    \left(
    \frac{e^{-\frac{V\cos\eta_{j+1}}{r_iD_\theta}}p_{i,j+1}^{n+1}
    - e^{-\frac{V\cos\eta_j}{r_iD_\theta}}p_{i,j}^{n+1}}{\Delta\eta}
    \right),
\label{eq:angularfluxp}
\end{equation}
This flux is conservative for the angular drift and diffusion terms, and its form follows from the identity
\begin{equation}
    D_\theta\frac{\partial(rp)}{\partial\eta}
    + \frac{V}{r}\sin\eta\,rp
    = D_\theta e^{\frac{V\cos\eta}{rD_\theta}}
    \frac{\partial}{\partial\eta}
    \left(
    e^{-\frac{V\cos\eta}{rD_\theta}}rp
    \right).
\label{eq:angularidentity}
\end{equation}
The term $S_{ij}^{n+1}$ arises from the jump conditions (\ref{eq:annularjump}) and will be specified together with the discretization of the boundary conditions.

In the third equation of \eqref{eq:numericalscheme},
\begin{equation}
    G_{\mathrm{out},m+1/2}^{n+1} = e^{\frac{V\cos\eta_{\mathrm{out},m+1/2}}{R_{\mathrm{out}}D_\theta}}\left(\frac{e^{-\frac{V\cos\eta_{\mathrm{out},m+1}}{R_{\mathrm{out}}D_\theta}}q_{\mathrm{out},m+1}^{n+1}-e^{-\frac{V\cos\eta_{\mathrm{out},m}}{R_{\mathrm{out}}D_\theta}}q_{\mathrm{out},m}^{n+1}}{\Delta\eta}\right),\quad m=1,\ldots,N_\eta/2-1.
\label{eq:angularfluxq}
\end{equation}
with $\eta_{\mathrm{out},m+1/2}=\eta_{\mathrm{out},m}+\Delta\eta/2$.  It provides a discretization of the angular drift and diffusion terms in the second equation of \eqref{eq:annularisotropicPDE} in a similar way as in (\ref{eq:angularfluxp}).
The corresponding discretization of $q_\mathrm{in}$ is obtained in the same way.

The discretized boundary conditions are
\begin{equation}
\left\{
\begin{alignedat}{3}
    &p_{i,0}^{n}=p_{i,N_\eta}^{n},
    &\qquad &p_{i,N_\eta+1}^{n}=p_{i,1}^{n},
    &\qquad &i=1,\ldots,N_r,\\
    &p_{0,j}^{n}=0,
    & & 
    & &j=1,\ldots,N_\eta/2,\\
    &p_{N_r+1,j}^{n}=0,
    & &
    & &j=N_\eta/2+1,\ldots,N_\eta,\\
    &q_{\mathrm{out},0}^{n}=0,
    & &q_{\mathrm{out},N_\eta/2+1}^{n}=0,\\
    &q_{\mathrm{in},0}^{n}=0,
    & &q_{\mathrm{in},N_\eta/2+1}^{n}=0.
\end{alignedat}
\right.
\label{eq:fdboundary}
\end{equation}
The first condition imposes angular periodicity, while the next two are for (\ref{eq:outer0boundary}) and (\ref{eq:inner0boundary}). For the boundary densities, the homogeneous Dirichlet conditions are imposed directly in the angular fluxes where the indices $0$ and $N_\eta/2+1$ refer to the $\eta$-boundary of $q_\mathrm{in}$ and $q_\mathrm{out}$. 
The flux at $\cos\eta = 0$ is approximated in a similar way as in \eqref{eq:angularfluxq}. Using the homogeneous Dirichlet conditions in \eqref{eq:fdboundary} and noting that the distance between the endpoint and the adjacent cell center is $\Delta\eta/2$,
we have the following one-sided numerical fluxes
\begin{equation}
\begin{aligned}
    &G_{\mathrm{out},1/2}^{n+1} = \frac{2}{\Delta\eta}e^{\frac{V}{R_\mathrm{out}D_\theta}(\cos\eta_{\mathrm{out},1/2}-\cos\eta_{\mathrm{out},1})}q_{\mathrm{out},1}^{n+1},\\
    &G_{\mathrm{out},N_\eta/2+1/2}^{n+1} =-\frac{2}{\Delta\eta}e^{\frac{V}{R_\mathrm{out}D_\theta}(\cos\eta_{\mathrm{out},N_\eta/2+1/2}-\cos\eta_{\mathrm{out},N_\eta/2})}q_{\mathrm{out},N_\eta/2}^{n+1},\\
    &G_{\mathrm{in},1/2}^{n+1} = \frac{2}{\Delta\eta}e^{\frac{V}{R_\mathrm{in}D_\theta}(\cos\eta_{\mathrm{in},1/2}-\cos\eta_{\mathrm{in},1})}q_{\mathrm{in},1}^{n+1},\\
    &G_{\mathrm{in},N_\eta/2+1/2}^{n+1} =-\frac{2}{\Delta\eta}e^{\frac{V}{R_\mathrm{in}D_\theta}(\cos\eta_{\mathrm{in},N_\eta/2+1/2}-\cos\eta_{\mathrm{in},N_\eta/2})}q_{\mathrm{in},N_\eta/2}^{n+1}.
\end{aligned}
\label{eq:wallendpointflux}
\end{equation}
The jump conditions (\ref{eq:annularjump}) give the injection of particles from the boundaries into the bulk at $(R_\mathrm{in},\pm\pi/2)$ and $R_\mathrm{out},\pm\pi/2$. The probability injected from a boundary increases the probability in the receiving bulk cell, so a source term $S_{ij}^{n+1}$ is added to account for this:
\begin{equation}
\begin{aligned}
    S_{ij}^{n+1}&=\frac{D_\theta}{\Delta r\Delta\eta}\delta_{i,N_r}\left(\delta_{j,N_\eta}G_{\mathrm{out},1/2}^{n+1}-\delta_{j,N_\eta/2+1}G_{\mathrm{out},N_\eta/2+1/2}^{n+1}\right)\\
    &+\frac{D_\theta}{\Delta r\Delta\eta}\delta_{i,1}\left(\delta_{j,N_\eta/2}G_{\mathrm{in},1/2}^{n+1}-\delta_{j,1}G_{\mathrm{in},N_\eta/2+1/2}^{n+1}\right).
\end{aligned}
\label{eq:fdrelease}
\end{equation}
Here $\delta_{ij}$ denotes the Kronecker delta, with $\delta_{ij}=1$ if $i=j$ and $\delta_{ij}=0$ otherwise. The radial Dirac terms $\delta_{R_{\mathrm{out}}}(r)$ and $\delta_{R_{\mathrm{in}}}(r)$ are approximated by $\delta_{i,N_r}/\Delta r$ and $\delta_{i,1}/\Delta r$ respectively. The factor $1/\Delta\eta$ converts the release at each endpoint into a change in the average density of the receiving angular cell. 
\begin{remark}[First order approximation of the jump condition in $\eta$-direction]
Consider the outer boundary at $-\pi/2$ with homogeneous boundary condition $R_\mathrm{out}q_\mathrm{out}(t,-\pi/2)=0$, we have
\begin{equation}
\begin{aligned}
    G_{\mathrm{out},1/2}^{n+1}
    &=
    \frac{2}{\Delta\eta}e^{-\frac{V}{R_{\mathrm{out}}D_\theta}\sin\frac{\Delta\eta}{2}}\frac{1}{\Delta\eta}\int_{-\pi/2}^{-\pi/2+\Delta\eta}R_\mathrm{out}q_\mathrm{out}(t^{n+1},\eta)\dd\eta\\
    &=
    \frac{2}{\Delta\eta}\left(1-\frac{V}{2R_{\mathrm{out}}D_\theta}\Delta\eta+O(\Delta\eta^2)\right)\left(\frac{\Delta\eta}{2}\frac{\partial}{\partial\eta}(R_\mathrm{out}q_\mathrm{out})(t^{n+1},-\frac{\pi}{2})+O(\Delta\eta^2)\right)\\
    &=\frac{\partial}{\partial\eta}(R_\mathrm{out}q_\mathrm{out})(t^{n+1},-\frac{\pi}{2})+O(\Delta\eta)
\end{aligned}
\end{equation}
Thus, the endpoint flux is consistent with the homogeneous Dirichlet condition, and $D_\theta G_{\mathrm{out},1/2}^{n+1}$ approximates $D_\theta \partial(R_\mathrm{out}q_\mathrm{out})/\partial\eta$ with first-order accuracy. 
\end{remark}

\begin{remark}[Numerical discretization of the PDE system in circular domains.]
For the circular domain, the inner-boundary condition is replaced by transmission through the origin with a $\pi$-shift in $\eta$. Since the angular drift coefficient scales as $1/r$, it becomes large near the origin as the radial mesh is refined. We adopt a Scharfetter--Gummel discretization to handle this large angular drift. With mesh widths $\Delta r$ and $\Delta\eta$, the bulk angular flux is discretized as
\begin{equation}
    J_{i,j+\frac12}=\frac{D_\theta}{\Delta\eta}\left[B(z_{i,j+\frac12})(rp)_{i,j}-B(-z_{i,j+\frac12})(rp)_{i,j+1}\right],\qquad z_{i,j+\frac12}=\frac{V}{D_\theta r_i}(\cos\eta_j-\cos\eta_{j+1}).
\end{equation}
Here $B(z)=z/(e^z-1)$ and $B(0)=1$. For smooth solutions at fixed $r_i>0$, Taylor expansion gives
\begin{equation}
    J_{i,j+\frac12}=\left(-\frac{V}{r}\sin\eta\,(rp)-D_\theta\partial_\eta(rp)\right)\Bigg|_{(r_i,\eta_{j+\frac12})}+O(\Delta\eta^2).
\end{equation}
Thus, the angular flux is second-order consistent at fixed positive radii. Near the origin, the exponential factors in (\ref{eq:angularfluxp}) can produce excessively large coefficients. In contrast, $B(z)\sim ze^{-z}$ and $B(-z)\sim z$ as $z\to+\infty$, so the coefficients grow at most linearly in $|z|$. This avoids exponentially large flux coefficients near the origin. Finally, the angular part of the original equation is discretized by $(J_{i,j+\frac12}-J_{i,j-\frac12})/\Delta\eta$ and the remaining discretization follows the annular case.
\end{remark}

\subsection{Conservation of the Discretization}\label{ch:4.2}
We first define the discrete total mass at time $t^n = n\Delta t$ by
\begin{equation}
    M_h^n=\Delta r\Delta\eta\sum_{i=1}^{N_r}\sum_{j=1}^{N_\eta}p_{ij}^n+\Delta\eta\sum_{m=1}^{N_\eta/2}q_{\mathrm{out},m}^n+\Delta\eta\sum_{m=1}^{N_\eta/2}q_{\mathrm{in},m}^n,
\label{eq:discretemass}
\end{equation}
which approximates the continuous total mass
\begin{equation}
    M(t)=\int_{R_{\mathrm{in}}}^{R_{\mathrm{out}}}\int_{-\pi/2}^{3\pi/2}rp(t,r,\eta)\,\mathrm d\eta\,\mathrm dr+\int_{-\pi/2}^{\pi/2}R_{\mathrm{out}}q_{\mathrm{out}}(t,\eta)\,\mathrm d\eta+\int_{\pi/2}^{3\pi/2}R_{\mathrm{in}}q_{\mathrm{in}}(t,\eta)\,\mathrm d\eta.
\end{equation}
We verify that this quantity is conserved by the numerical scheme.

\begin{theorem}\label{th:numericalconservation}
    The discrete total mass (\ref{eq:discretemass}) satisfies $M_h^{n+1}=M_h^n$ for all $n\geq0$.
\end{theorem}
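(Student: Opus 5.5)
The plan is to multiply each line of the scheme \eqref{eq:numericalscheme} by $\Delta t$ times the weight it carries in \eqref{eq:discretemass}, sum over all indices, and show that everything other than the time differences cancels. The weight is $\Delta r\Delta\eta$ for the $p_{ij}$ equations and $\Delta\eta$ for the $q_{\mathrm{out},m}$ and $q_{\mathrm{in},m}$ equations. This gives
\[
M_h^{n+1}-M_h^n=\Delta t\,(A_r+A_\eta+A_S+B_{\mathrm{out}}+B_{\mathrm{in}}),
\]
where $A_r$, $A_\eta$, $A_S$ collect the radial transport, angular flux and source contributions of $p$, and $B_{\mathrm{out}}$, $B_{\mathrm{in}}$ collect the angular flux and source terms of the boundary densities. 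I would evaluate each group separately by discrete summation by parts.

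\textbf{Bulk angular fluxes.} For fixed $i$, the sum $\sum_{j=1}^{N_\eta}(F_{i,j+1/2}^{n+1}-F_{i,j-1/2}^{n+1})$ telescopes to $F_{i,N_\eta+1/2}^{n+1}-F_{i,1/2}^{n+1}$. By the periodic conditions in \eqref{eq:fdboundary}, $p_{i,0}=p_{i,N_\eta}$ and $p_{i,N_\eta+1}=p_{i,1}$. The exponential weights in \eqref{eq:angularfluxp} depend only on $\cos\eta$, which is $2\pi$-periodic. Hence these two fluxes coincide and $A_\eta=0$. The flux $F$ across $\eta=\pm\pi/2$ is not modified there; the singular exchange enters only through $S_{ij}$.

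\textbf{Radial upwinding.} For $j\le N_\eta/2$ we have $\cos\eta_j>0$, and $\sum_i(p_{ij}-p_{i-1,j})=p_{N_r,j}-p_{0,j}=p_{N_r,j}$ since $p_{0,j}=0$. This contributes $-\Delta\eta\,V\cos\eta_j\,p_{N_r,j}^{n+1}$. For $j>N_\eta/2$, $\sum_i(p_{i+1,j}-p_{ij})=-p_{1,j}$ since $p_{N_r+1,j}=0$. This contributes $+\Delta\eta\,V\cos\eta_j\,p_{1,j}^{n+1}$, which is nonpositive because $\cos\eta_j<0$. These terms are exactly cancelled by the right-hand sides of the $q$ equations multiplied by $\Delta\eta$. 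This uses $\eta_{\mathrm{out},m}=\eta_m$ for $m=1,\ldots,N_\eta/2$ in the $q_{\mathrm{out}}$ source and $\eta_{\mathrm{in},m}=\eta_{N_\eta/2+m}$ in the $q_{\mathrm{in}}$ source $-V\cos\eta_{N_\eta/2+m}\,p_{1,N_\eta/2+m}$.

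\textbf{Boundary angular fluxes and release terms.} For each boundary density, the sum over $m$ of $G_{m+1/2}-G_{m-1/2}$ telescopes to the endpoint fluxes in \eqref{eq:wallendpointflux}. This gives
\[
B_{\mathrm{out}}=D_\theta\bigl(G_{\mathrm{out},N_\eta/2+1/2}^{n+1}-G_{\mathrm{out},1/2}^{n+1}\bigr),
\]
and analogously for $B_{\mathrm{in}}$; note $G_{1/2}\ge0\ge G_{N_\eta/2+1/2}$, so this is a loss. Summing \eqref{eq:fdrelease} with weight $\Delta r\Delta\eta$, the Kronecker deltas each select a single cell. The result is
\[
A_S=D_\theta\bigl(G_{\mathrm{out},1/2}-G_{\mathrm{out},N_\eta/2+1/2}\bigr)+D_\theta\bigl(G_{\mathrm{in},1/2}-G_{\mathrm{in},N_\eta/2+1/2}\bigr),
\]
all at time level $n+1$. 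Therefore $A_S+B_{\mathrm{out}}+B_{\mathrm{in}}=0$.

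\textbf{Main obstacle.} The step I expect to be delicate is this last bookkeeping. One must check that the one-sided endpoint fluxes \eqref{eq:wallendpointflux} used inside the $q$ telescoping sums are literally the same quantities injected through $S_{ij}$, with the correct signs and normalisations $D_\theta/(\Delta r\Delta\eta)$ versus $D_\theta/\Delta\eta$. One must also check that the indices are consistent, namely that the cells receiving the released mass are bulk cells, so no mass is counted twice. Once these are verified, all contributions vanish and $M_h^{n+1}=M_h^n$. Because every term is evaluated at the implicit level $n+1$, no time-discretization error enters.
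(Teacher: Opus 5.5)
Your proposal is correct and follows essentially the same route as the paper's proof. The angular fluxes $F$ telescope and cancel by periodicity. The upwind radial sums reduce via $p_{0,j}=p_{N_r+1,j}=0$ to the wall terms (the paper's $C_{\mathrm{out}}^{n+1}$, $C_{\mathrm{in}}^{n+1}$), which the $q$ source terms absorb. The endpoint fluxes (the paper's $E_{\mathrm{out}}^{n+1}$, $E_{\mathrm{in}}^{n+1}$) appear with opposite signs in the summed $q$ equations and in $\Delta r\Delta\eta\sum_{i,j}S_{ij}^{n+1}$.

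Two cosmetic points. You first say $B_{\mathrm{out}},B_{\mathrm{in}}$ contain the $q$ source terms, but you then use them as the angular-flux parts alone; $A_S+B_{\mathrm{out}}+B_{\mathrm{in}}=0$ holds only under that second reading. Also, your remark $G_{1/2}\ge0\ge G_{N_\eta/2+1/2}$ presupposes $q\ge0$, though the argument does not need it.
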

\begin{proof}
    Let the bulk and boundary masses be
    \begin{equation}
        M_{p,h}^n=\Delta r\Delta\eta\sum_{i=1}^{N_r}\sum_{j=1}^{N_\eta}p_{ij}^n,\quad 
        M_{\mathrm{out},h}^n=\Delta\eta\sum_{m=1}^{N_\eta/2}q_{\mathrm{out},m}^n,\quad M_{\mathrm{in},h}^n=\Delta\eta\sum_{m=1}^{N_\eta/2}q_{\mathrm{in},m}^n.
    \end{equation}

    First, we multiply the equations for $p$ by $\Delta r\Delta\eta$ and take summation over all interior cells. Each angular flux appears twice with opposite signs, and the two remaining endpoint fluxes cancel by periodicity, that is $\sum_{j=1}^{N_\eta}\left(F_{i,j+1/2}^{n+1}-F_{i,j-1/2}^{n+1}\right)=F_{i,N_\eta+1/2}^{n+1}-F_{i,1/2}^{n+1}=0.$
    The radial contributions from adjacent cells also cancel, leaving only the boundary terms. Using the zero conditions at the boundaries for density $p$, we obtain
    \begin{equation}
    \begin{aligned}
        &C_{\mathrm{out}}^{n+1}:=\Delta\eta\sum_{j=1}^{N_\eta/2}V\cos\eta_j\sum_{i=1}^{N_r}\left(p_{ij}^{n+1}-p_{i-1,j}^{n+1}\right)=\Delta\eta\sum_{j=1}^{N_\eta/2}V\cos\eta_j\,p_{N_r,j}^{n+1},\\
        &C_{\mathrm{in}}^{n+1}:=\Delta\eta\sum_{j=N_\eta/2+1}^{N_\eta}V\cos\eta_j\sum_{i=1}^{N_r}\left(p_{i+1,j}^{n+1}-p_{ij}^{n+1}\right)=-\Delta\eta\sum_{j=N_\eta/2+1}^{N_\eta}V\cos\eta_j\,p_{1,j}^{n+1}.
    \end{aligned}
    \end{equation}
    Thus the difference of the discrete bulk mass reads
    \begin{equation}
        \frac{M_{p,h}^{n+1}-M_{p,h}^{n}}{\Delta t}=-C_{\mathrm{out}}^{n+1}-C_{\mathrm{in}}^{n+1}+\Delta r\Delta\eta\sum_{i=1}^{N_r}\sum_{j=1}^{N_\eta}S_{ij}^{n+1}.
    \label{eq:interiormassbalance}
    \end{equation}
    For the outer boundary, let $E_\mathrm{out}^{n+1} := D_\theta\left(G_{\mathrm{out},1/2}^{n+1}-G_{\mathrm{out},N_\eta/2+1/2}^{n+1}\right)$ and define $E_\mathrm{in}^{n+1}$ in the same way for the inner wall. From (\ref{eq:fdrelease}), we have $\Delta r\Delta\eta\sum_{i=1}^{N_r}\sum_{j=1}^{N_\eta}S_{ij}^{n+1} =E_{\mathrm{out}}^{n+1}+E_{\mathrm{in}}^{n+1}$.
    Substituting this identity into (\ref{eq:interiormassbalance}) gives
    \begin{equation}
        \frac{M_{p,h}^{n+1}-M_{p,h}^{n}}{\Delta t}=-C_{\mathrm{out}}^{n+1}-C_{\mathrm{in}}^{n+1}+E_{\mathrm{out}}^{n+1}+E_{\mathrm{in}}^{n+1}.
    \label{eq:bulkmassbalance}
    \end{equation}

    Next, we multiply each boundary equation by $\Delta\eta$ and take summation over $m=1,\ldots,N_\eta/2$. The fluxes between adjacent cells cancel, leaving only the two endpoint fluxes, thus
    \begin{equation}
        \frac{M_{\mathrm{out},h}^{n+1}-M_{\mathrm{out},h}^{n}}{\Delta t}
        =C_{\mathrm{out}}^{n+1}-E_{\mathrm{out}}^{n+1},\quad \frac{M_{\mathrm{in},h}^{n+1}-M_{\mathrm{in},h}^{n}}{\Delta t}
        =C_{\mathrm{in}}^{n+1}-E_{\mathrm{in}}^{n+1}.
    \label{eq:wallmassbalance}
    \end{equation}
    Adding (\ref{eq:bulkmassbalance}) and (\ref{eq:wallmassbalance}), we finally get $(M_h^{n+1}-M_h^n)/\Delta t=0$, which shows that the scheme preserves total mass in the discrete sense.
\end{proof}

\section{Numerical Results}\label{ch:5}

\subsection{Performance of the Numerical Discretization}
To demonstrate the performance of the numerical discretization, we consider an example in an annular domain with $R_\mathrm{in}=1$, $R_\mathrm{out}=2$, $V=1$ and $D_\theta=1$. The initial particle distribution is uniform within the annulus and the total mass is normalized to 1.

\paragraph{Mass Conservation.}  We performed the simulation up to $t=10$ and the evolution of the total mass with respect to time is shown in Figure \ref{fig:massconservation}. The total mass remains constant throughout the computation, resulting in a horizontal line in the figure, which confirms the conservative property of the proposed numerical discretization as proved in section \ref{ch:4.2}.
\begin{figure}[!tbh]
    \centering
    \includegraphics[scale=0.4]{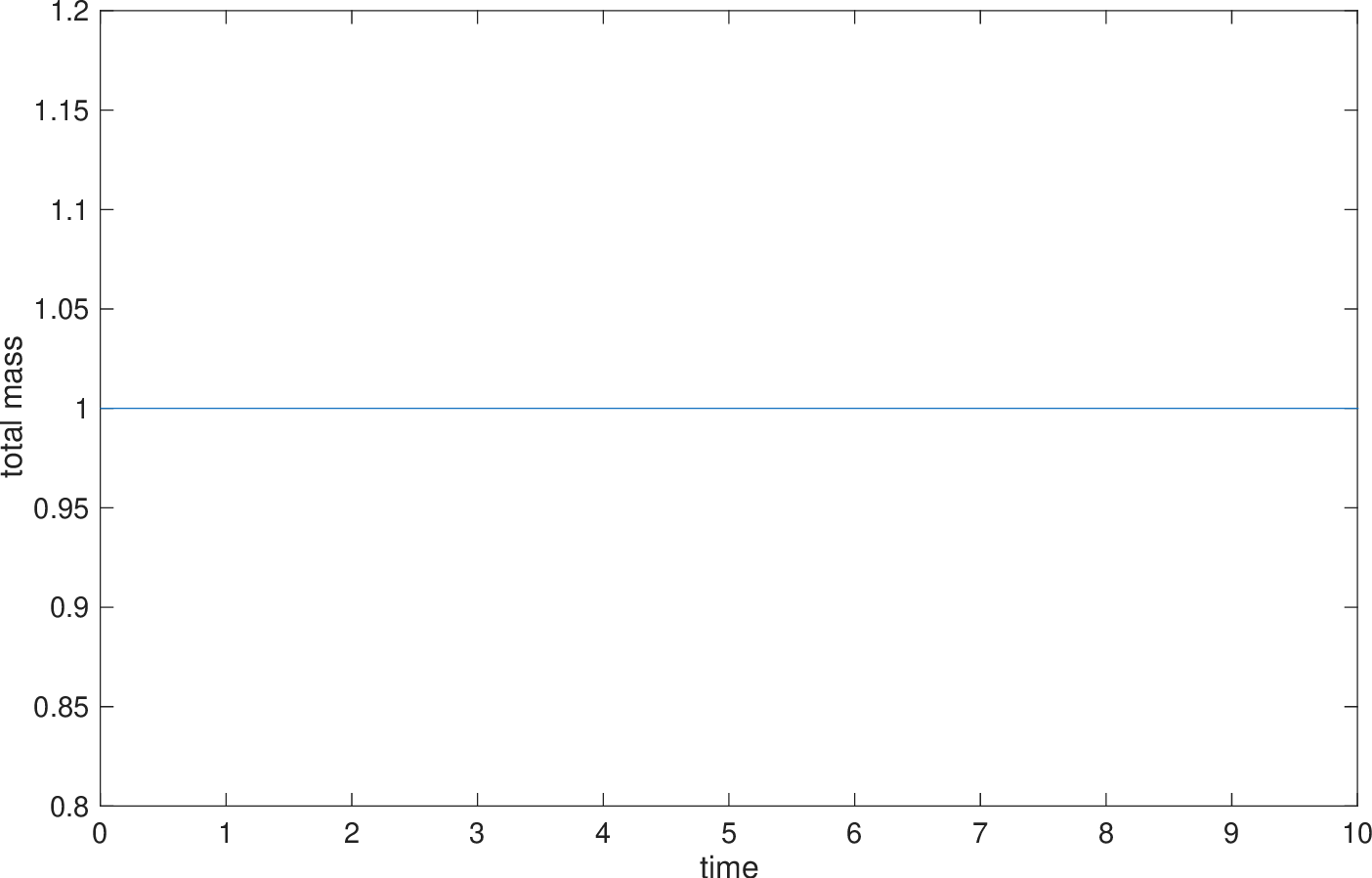}
    \caption{The time evolution of the discrete total mass defined in \eqref{eq:discretemass}.}\label{fig:massconservation}
\end{figure}

\paragraph{Convergence Order.}
To assess convergence, we compare the numerical solutions on successively refined grids with a reference solution computed on a $512\times512$ grid.
All solutions use the same time step $\Delta t=10^{-3}$ and are compared at $T=5$.
Each coarse-grid solution is prolonged to the reference grid by a piecewise-constant reconstruction that preserves the mass of each component.
The errors in the conserved densities are
\begin{equation}
\begin{aligned}
    e_p
    =& (rp)_h-(rp)_{\mathrm{ref}},\\
    e_{\mathrm{out}}
    =& (R_{\mathrm{out}}q_{\mathrm{out}})_h
       -(R_{\mathrm{out}}q_{\mathrm{out}})_{\mathrm{ref}},\\
    e_{\mathrm{in}}
    =& (R_{\mathrm{in}}q_{\mathrm{in}})_h
       -(R_{\mathrm{in}}q_{\mathrm{in}})_{\mathrm{ref}}.
\end{aligned}
\end{equation}
Here the subscript $h$ denotes the reconstructed coarse-grid quantities.

The radial Dirac distributions in the jump conditions \eqref{radialsymmetricbound} motivate assessing the error through integrals against test functions.
For the bulk error on $\Omega=(R_{\mathrm{in}},R_{\mathrm{out}})\times\mathbb{T}_{2\pi}$, the integral $\int_\Omega e_p\phi\dd r\dd\eta$ measures a weighted difference between the two densities; choosing $\phi=1$ gives their difference in total bulk mass.
To compare these integrals over different test functions while controlling their size and variation, we use
\begin{equation}
    \|e_p\|_{(H^1_{\mathrm{per}}(\Omega))'}
    =\sup_{0\ne\phi\in H^1_{\mathrm{per}}(\Omega)}
    \frac{\left|\int_\Omega e_p\phi\dd r\dd\eta\right|}
    {\left[\int_\Omega
    \left(|\phi|^2+|\partial_r\phi|^2+|\partial_\eta\phi|^2\right)
    \dd r\dd\eta\right]^{1/2}}.
\end{equation}
This is the dual norm of $H^1_{\mathrm{per}}(\Omega)$, whose functions are periodic in $\eta$ and have square-integrable values and first derivatives.
Since $\|e_p\|_{(H^1_{\mathrm{per}}(\Omega))'}\le\|e_p\|_{L^2(\Omega)}$, it provides a weaker error measure than the $L^2$ norm.

The two boundary errors are treated in the same way on $I_{\mathrm{out}}=(-\pi/2,\pi/2)$ and $I_{\mathrm{in}}=(\pi/2,3\pi/2)$.
For $b\in\{\mathrm{out},\mathrm{in}\}$, we choose test functions in $H^1_0(I_{b})$, which vanish at the angular endpoints, consistent with the homogeneous endpoint conditions for the boundary densities.
For $e=(e_p,e_{\mathrm{out}},e_{\mathrm{in}})$, the squared norms of the three components are added to give
\begin{equation}
\begin{aligned}
    X=&H^1_{\mathrm{per}}(\Omega)
    \times H^1_0(I_{\mathrm{out}})\times H^1_0(I_{\mathrm{in}}),\\
    \|e\|_{X'}^2
    =&\|e_p\|_{(H^1_{\mathrm{per}}(\Omega))'}^2
    +\|e_{out}\|_{(H^1_0(I_{out}))'}^2+\|e_{in}\|_{(H^1_0(I_{in}))'}^2.
\end{aligned}
\end{equation}
The dual norms can be evaluated by solving the auxiliary problems described in Remark~\ref{rmk:convergence}.
Using the same notation for the discrete error vectors, the resulting expression on the reference grid is
\begin{equation}
\begin{aligned}
    \|e\|_{X',\mathrm{disc}}^2
    =&
    \Delta r_{\mathrm{ref}}\Delta\eta_{\mathrm{ref}}\,
    e_p^\top A_p^{-1}e_p+
    \Delta\eta_{\mathrm{ref}}
    \left(
        e_{\mathrm{out}}^\top A_\eta^{-1}e_{\mathrm{out}}
        +
        e_{\mathrm{in}}^\top A_\eta^{-1}e_{\mathrm{in}}
    \right).
\end{aligned}\label{eq:discretenorm}
\end{equation}
Here $A_p$ discretizes $I-\partial_{rr}-\partial_{\eta\eta}$ with periodic angular and homogeneous Neumann radial boundary conditions, and $A_\eta$ discretizes $I-\partial_{\eta\eta}$ with homogeneous Dirichlet endpoint conditions.
The factors $\Delta r_{\mathrm{ref}}\Delta\eta_{\mathrm{ref}}$ and $\Delta\eta_{\mathrm{ref}}$ are the cell measures used to approximate the bulk and boundary integrals, respectively.

Table~\ref{tb:annuluserror} reports the errors for $N_r,N_\eta\in\{32,64,128,256\}$.
For the convergence plot in Figure~\ref{fig:annuluserror}, we choose $N_r=N_\eta=N$.
Along this refinement path, both mesh sizes are successively halved, and the error relative to the reference solution exhibits approximately first-order decay in the chosen discrete dual norm over the tested grid range.

\begin{table}[!tbh]
	\caption{Discrete dual-norm error for results in annular domains under different resolutions.}
	\label{tb:annuluserror}
	\centering
	\medskip\small\renewcommand{\arraystretch}{1.15}
	\begin{tabular}{||l|cccc||}
		\hline
		\diagbox{$N_r$}{$N_\eta$} & 32 & 64 & 128 & 256 \\
		\hline
		32  & 2.75e-02 & 2.05e-02 & 1.71e-02 & 1.55e-02\\
		64  & 2.46e-02 & 1.62e-02 & 1.18e-02 & 9.85e-03\\
		128 & 2.35e-02 & 1.39e-02 & 8.58e-03 & 5.98e-03\\
		256 & 2.32e-02 & 1.30e-02 & 6.75e-03 & 3.43e-03\\
		\hline
	\end{tabular}
\end{table}
\begin{figure}[!tbh]
	\centering
	\includegraphics[scale=0.5]{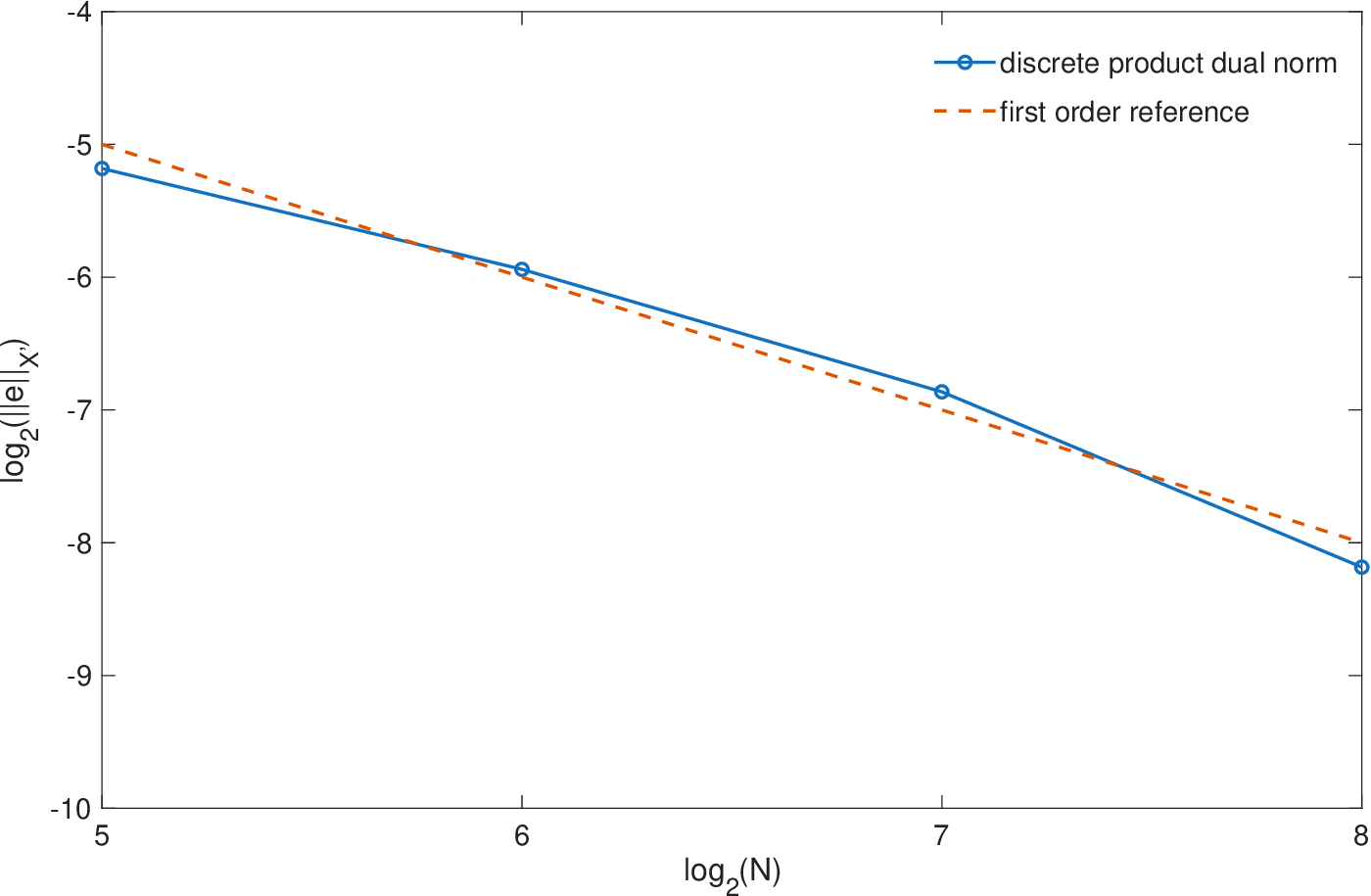}
	\caption{The convergence order with $N$.}\label{fig:annuluserror} 
\end{figure}

\begin{remark}[Evaluation of the dual norms]\label{rmk:convergence}
The bulk auxiliary solution $z_p\in H^1_{\mathrm{per}}(\Omega)$ is defined by
\begin{equation}
    \int_\Omega
    \left(z_p\phi+\partial_r z_p\partial_r\phi
    +\partial_\eta z_p\partial_\eta\phi\right)\dd r\dd\eta
    =\int_\Omega e_p\phi\dd r\dd\eta
\end{equation}
for all $\phi\in H^1_{\mathrm{per}}(\Omega)$ by the Riesz Representation Theorem.
This is the weak formulation of
\begin{equation}
\begin{cases}
    (I-\partial_{rr}-\partial_{\eta\eta})z_p=e_p
        &\text{in }\Omega,\\
    \partial_r z_p=0
        &\text{at }r=R_{\mathrm{in}},R_{\mathrm{out}},\\
    z_p(r,\eta+2\pi)=z_p(r,\eta).
\end{cases}
\end{equation}
The radial Neumann conditions arise because no radial boundary values are imposed on the test functions.
The Cauchy--Schwarz inequality bounds the normalized integral in the definition of the dual norm by $\|z_p\|_{H^1(\Omega)}$, with equality for $\phi=z_p$ when $z_p\ne0$.
Hence
\begin{equation}
    \|e_p\|_{(H^1_{\mathrm{per}}(\Omega))'}^2
    =\|z_p\|_{H^1(\Omega)}^2
    =\int_\Omega e_pz_p\dd r\dd\eta.
\end{equation}
Similarly, for $b\in\{\mathrm{out},\mathrm{in}\}$, the boundary auxiliary solution satisfies
\begin{equation}
\begin{cases}
    (I-\partial_{\eta\eta})z_b=e_b&\text{in }I_b,\\
    z_b=0&\text{on }\partial I_b,
\end{cases}
\end{equation}
and $\|e_b\|_{(H^1_0(I_b))'}^2=\int_{I_b}e_bz_b\dd\eta$.
Discretizing these problems gives $A_pz_p=e_p$ and $A_\eta z_b=e_b$; substituting their solutions into the integrals yields \eqref{eq:discretenorm}.
\end{remark}

\paragraph{Comparison with individual-based model in a circular domain}\label{ch:5.2.1}
We perform Monte-Carlo simulations for the individual-based model (\ref{eq:circularSDE}) with Euler-Maruyama-like scheme \cite{kloeden1992stochastic} after introducing $\eta = \theta-\xi$. Since $\eta$ is a linear combination of $\theta$ and $\xi$, $\dd\eta = \dd\theta - \dd\xi$. Note that $-B_t$ is also a standard Brownian motion and is therefore equivalent to $B_t$. The individual based model for the radial-symmetric model \eqref{eq:2dsystem} becomes 
\begin{equation}
\left\{
\begin{aligned}
\dd r_t &= 
\Biggl\{
\begin{aligned}
& V \cos\eta_t\,\dd t, && r_t < R\,\text{ or }\,r_t=R,\,\cos\eta_t<0,\\
& 0, && \text{otherwise,}
\end{aligned}
\Biggr.\\
\dd \eta_t &= -\arctan\left(\frac{V\sin\eta_t\,\dd t}{V\cos\eta_t\,\dd t + r_t}\right)+\sqrt{2 D_\theta}\,\dd B_t.
\end{aligned}
\right.
\end{equation}
We simulate $5\times10^6$ particles, and each of them is represented by $r_k^n$ and $\eta_k^n$ at $t^n = n\Delta t$. The pair $(r_k,\eta_k)$ is evolved by the following calculations:
\begin{enumerate}
    \item Compute $\eta_{k,\text{temp}}^{n+1}$. Generate an increment $Z_k^n\sim\mathcal{N}(0,1)$ independent of previous ones, set $\eta_{k,\text{temp}}^{n+1}=\eta_k^n-\arctan\left(V\sin\eta_k^n\Delta t/(V\cos\eta_k^n\Delta t + r_k^n)\right)+\sqrt{2 D_\theta\Delta t}\,Z_k^n$.
    \item Update $r_k^{n+1}$ and $\eta_k^{n+1}$. Set $r_k^{n+1}=\min\{|r_k^n+V\cos\eta_{k,\text{temp}}^{n+1}\Delta t|,R\}$. If $r_k^n+V\cos\eta_{k,\text{temp}}^{n+1}\Delta t<0$, then set $\eta_k^{n+1}=\eta_{k,\text{temp}}^{n+1}+\pi$. Otherwise, set $\eta_k^{n+1}=\eta_{k,\text{temp}}^{n+1}$.
\end{enumerate}

Figure \ref{fig:circularp} shows the stochastic simulation results and numerical results for the probability density function $rp(t,r,\eta)$ inside the domain. To accurately resolve the boundary-layer structure, we use 1000 grid points in the radial direction. For visualization, the numerical solution is averaged over each group of ten consecutive radial grid points, yielding a representation on a $100 \times 100$ grid. The numerical solutions are in good agreement with the particle simulation results, and correctly describe the vacuum region near the boundary when $\cos\eta<0$, which supports the validity of the Fokker-Planck model. The density peaks at $(R,\pm\pi/2)$ since these are the two points that particles at boundary contact state can transit to the free-swimming state.
As $t$ increases, the probability density in $\{0<r<R\}$ decreases while that near $\{r=R\}$ increases, indicating the emergence of boundary accumulation.

\begin{figure}[!tbh]
    \centering
    \includegraphics[width = 1.0\textwidth]{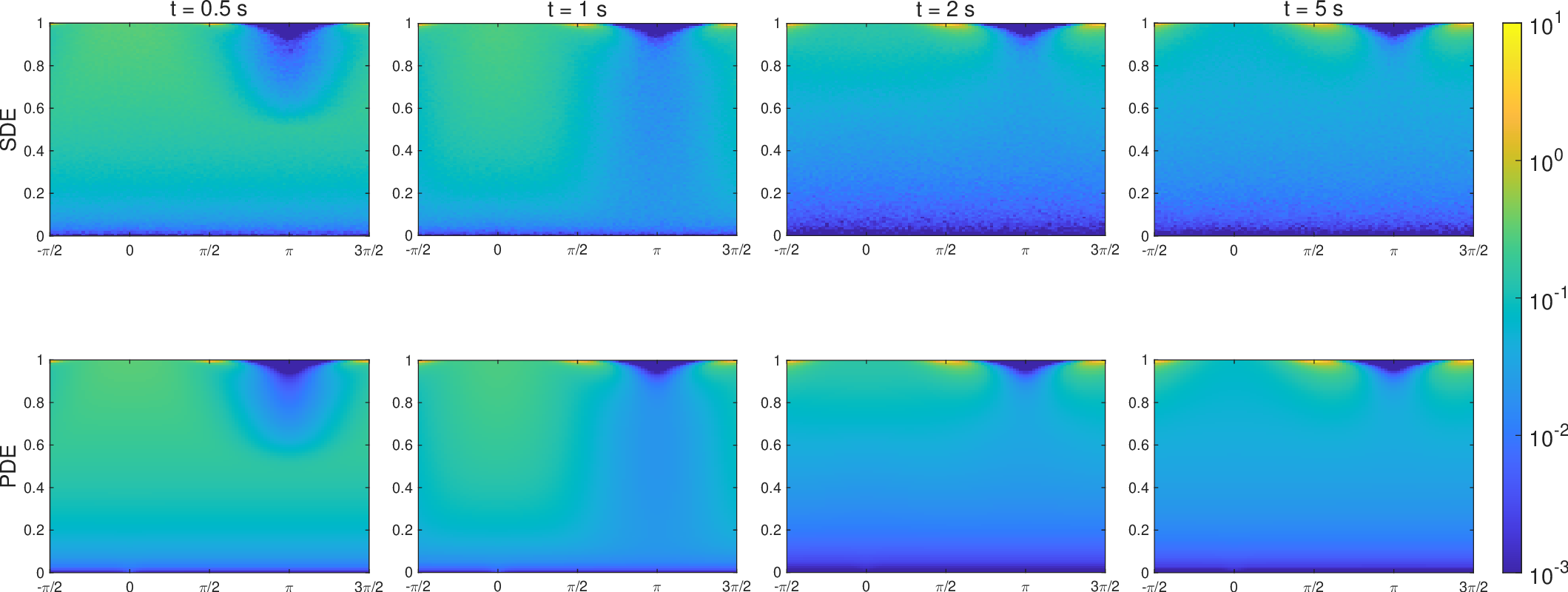}
    \caption{Comparison between SDE simulation results (top) and PDE numerical results (bottom) of the probability density $rp(t,r,\eta)$ inside a circular domain. From left to right: $t=0.5s$, $1s$, $2s$, $5s$.}\label{fig:circularp}
\end{figure}

To verify the behavior of $rp(t,r,\eta)$ near the boundary, $p_{N_r,j}$ are plotted in Figure \ref{fig:circularq}. Our numerical results do capture the peaks at $(R, \pm\pi/2)$ which get sharper as time increases, exhibiting the same behavior as SDE results. The results of $Rq(t,\eta)$ are also plotted in Figure \ref{fig:circularq}. The maximum of this function keeps increasing with time until the system itself reaches the steady state.
\begin{figure}[!tbh]
    \centering
    \includegraphics[width=1.0\textwidth]{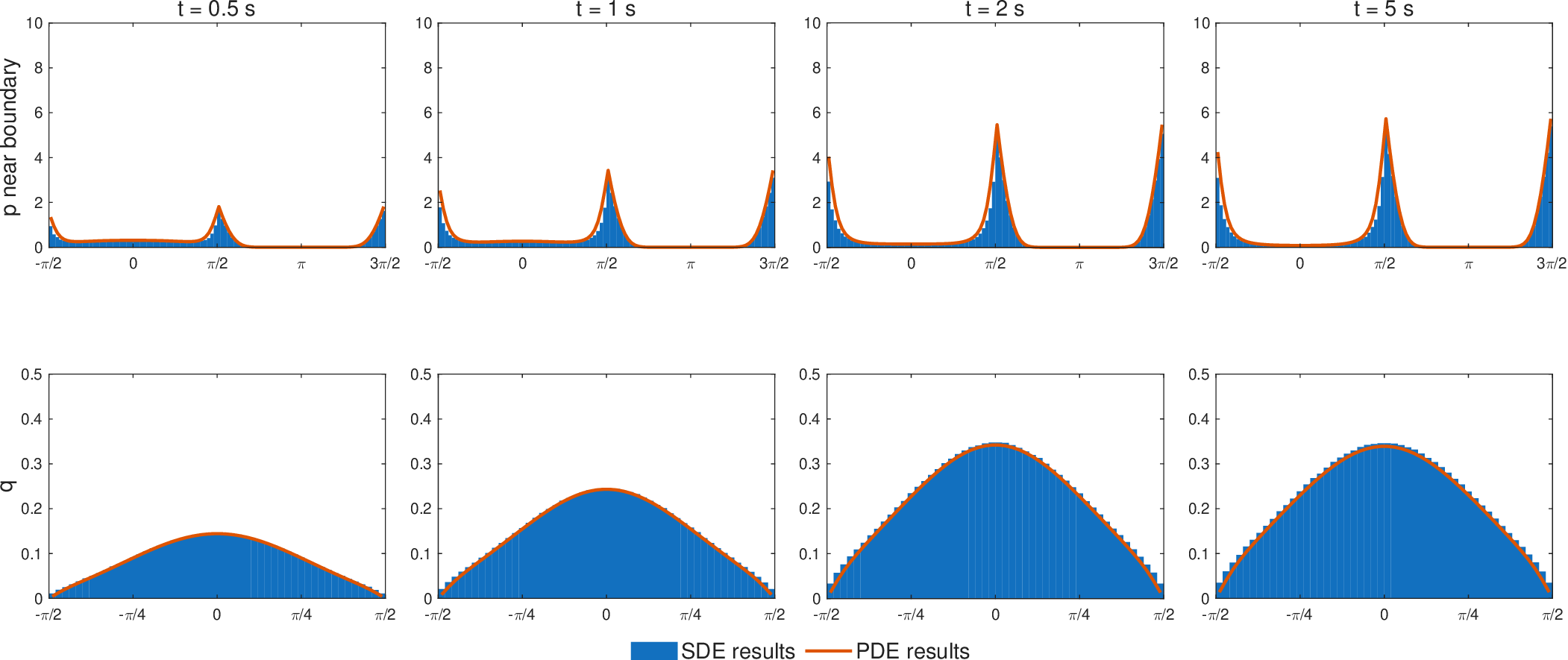}
    \caption{Comparison between SDE simulation results (bar) and PDE numerical results (curve) in a circular domain. Probability density near the boundary (Top) and on the boundary (Bottom). From left to right: $t=0.5s$, $1s$, $2s$, $5s$.}\label{fig:circularq}
\end{figure}

\paragraph{Comparison with individual-based model in an annular domain.}
We choose the radius of the outer boundary and inner boundary to be $R_\mathrm{out} = 2$ and $R_\mathrm{in} = 1$, respectively. Velocity $V = 1$ and rotational diffusion coefficient $D_\theta = 1$ in both stochastic simulations and numerical solutions. The initial condition is set to be the uniform distribution in $\{R_\mathrm{in}<r<R_\mathrm{out}\}$. All the results illustrated in this section share the same initial distribution and parameter settings.

We perform Monte-Carlo simulations for the individual-based model (\ref{eq:annularSDE}) with Euler-Maruyama-like scheme \cite{kloeden1992stochastic} after introducing $\eta = \theta-\xi$. The simplified individual-based model reads
\begin{equation}
\left\{
\begin{aligned}
\dd r_t &= 
\Biggl\{
\begin{aligned}
& V \cos\eta_t\,\dd t, && R_\mathrm{in} < r_t < R_\mathrm{out}\,\text{ or }\,r_t=R_\mathrm{out},\,\cos\eta_t<0 \,\text{ or }\,r_t=R_\mathrm{in},\,\cos\eta_t>0,\\
& 0, && \text{otherwise,}
\end{aligned}
\Biggr.\\
\dd\eta_t &= -\arctan\left(\frac{V\sin\eta_t\,\dd t}{V\cos\eta_t\,\dd t + r_t}\right) + \sqrt{2 D_\theta}\,\dd B_t.
\end{aligned}
\right.
\end{equation}
For each of the $5\times10^6$ particles, we use $r_k^n$ and $\eta_k^n$ to determine its state at $t^n = n\Delta t$. The pair $(r_k,\eta_k)$ is evolved by the following calculations:
\begin{enumerate}
    \item Update $\eta_k^n$. Generate an increment $Z_k^n   \sim\mathcal{N}(0,1)$ independent of previous ones, set $\eta_{k}^{n+1}=\eta_k^n-\arctan\left(V\sin\eta_t\Delta t/(V\cos\eta_k^n\Delta t + r_k^n)\right)+\sqrt{2 D_\theta\Delta t}\,Z_k^n$.
    \item Update $r_k^n $. Set $r_k^{n+1}=\max\{\min\{r_k^n+V\cos\eta_{k}^{n+1}\Delta t,R_\mathrm{out}\}, R_\mathrm{in}\}$.
\end{enumerate}

Figure \ref{fig:annulusp} shows the stochastic simulations and numerical results for the probability density $rp(t,r,\eta)$ inside the annulus at different observation times. Same as before, we use $1000$ grid points in the radial direction.  As time goes on, the density function peaks at $(R_\mathrm{out}, \pm\pi/2)$ and $(R_\mathrm{in}, \pm\pi/2)$ since these are the points where the particles switch between free-swimming and boundary aggregation states. The particle density in the interior of the domain decreases until it reaches the steady state, while the particles gradually accumulate at the two boundaries.
\begin{figure}[htbp]
    \centering
    \includegraphics[width=1.0\textwidth]{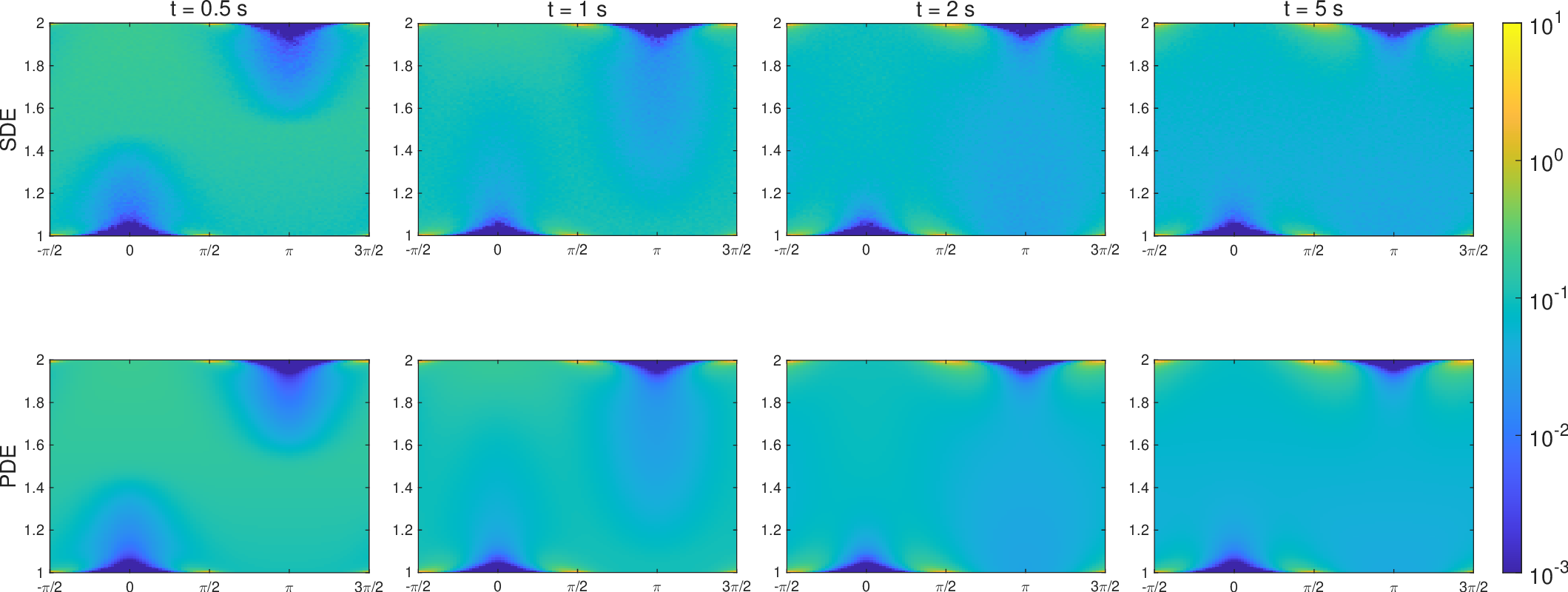}
    \caption{Comparison between SDE simulation results (top) and PDE numerical results (bottom) of the probability density $rp(t,r,\eta)$ inside an annular domain. From left to right: $t=0.5s$, $1s$, $2s$, $5s$.}\label{fig:annulusp}
\end{figure}

We plot $p_{1,j}$ and $p_{N_r, j}$ in Figure \ref{fig:annuluspend} to show the behavior of $rp(t,r,\eta)$ near the two boundaries of the domain. Our model successfully captured the peaks at $(R_\mathrm{in}, \pm\pi/2)$ and $(R_\mathrm{out}, \pm\pi/2)$. As is in the case discussed in section \ref{ch:5.2.1}, the peaks exhibited near the outer boundary get sharper as $t$ increases. However, a decrease in the height of the peaks near the inner boundary can be observed. We also plotted the numerical results of $R_\mathrm{in}q_\mathrm{in}$ and $R_\mathrm{out}q_\mathrm{out}$ at the selected time to show the evolution of these two probability densities in Figure \ref{fig:annulusq}. The probability of finding particles near the outer boundary is larger than that near the inner boundary, suggesting some kind of preferences when choosing the boundaries to aggregate.

\begin{figure}[!tbh]
    \centering
    \includegraphics[width=1.0\textwidth]{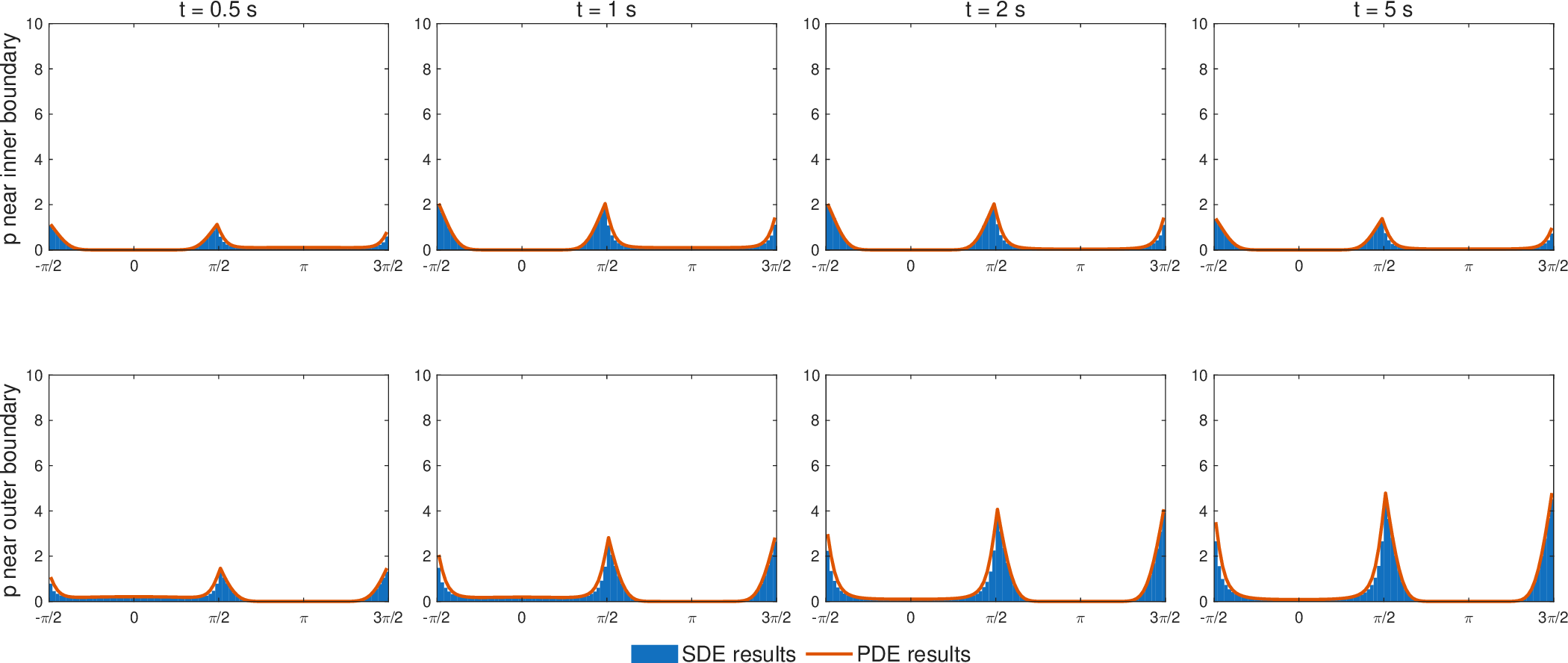}
    \caption{Comparison between SDE simulation results (bar) and PDE numerical results (curve) in an annular domain. Probability density near the inner circle (Top) and the outer circle (Bottom). From left to right: $t=0.5s$, $1s$, $2s$, $5s$.}\label{fig:annuluspend}
\end{figure}

\begin{figure}[!tbh]
    \centering
    \includegraphics[width=1.0\textwidth]{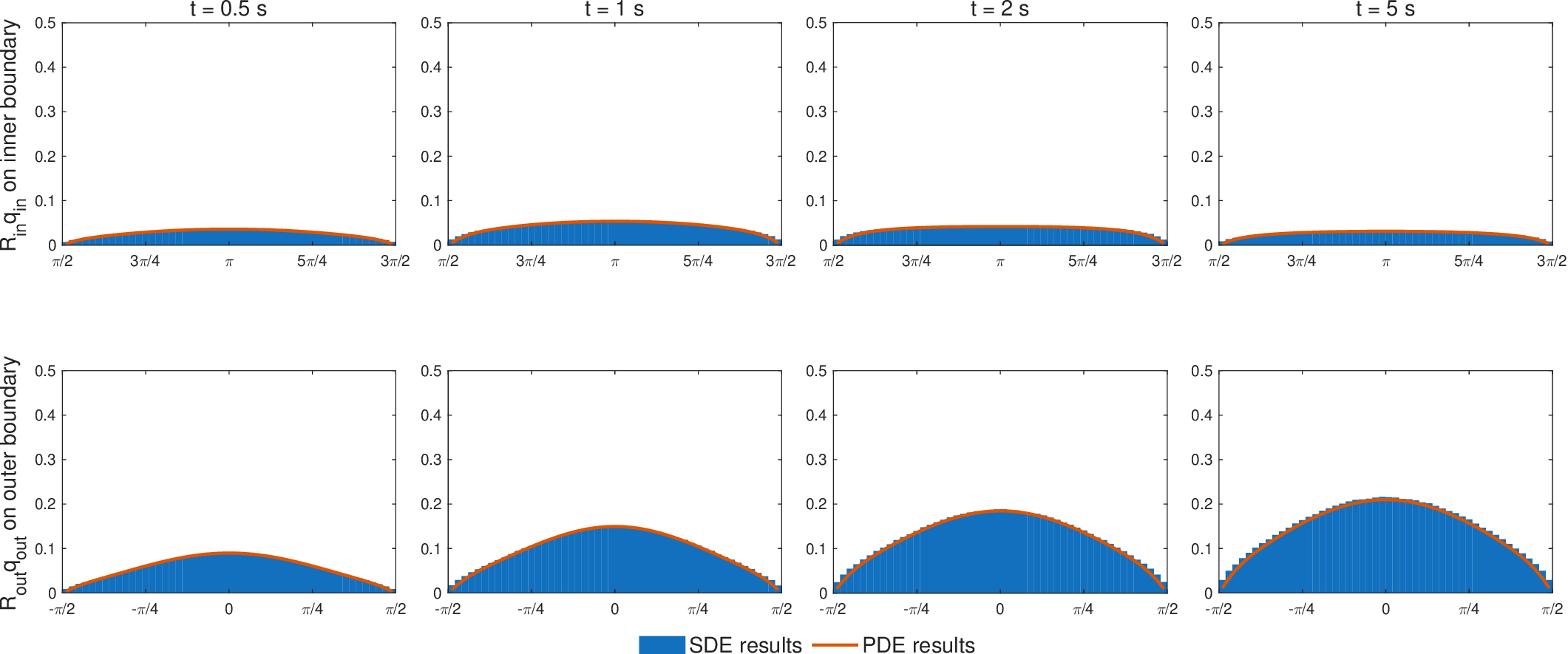}
    \caption{Comparison between SDE simulation results (bar) and PDE numerical results (curve). From top to bottom: probability density on the inner circle, probability density on the outer circle. From left to right: $t=0.5s$, $1s$, $2s$, $5s$.}\label{fig:annulusq} 
\end{figure}

\subsection{Boundary Accumulation with Different Parameters}\label{ch:5.2.2}

\paragraph{Boundary accumulation in a circular Domain} 
Figure \ref{fig:circularRvariance} illustrates how the size of the domain and the strength of the rotational diffusion process would affect the probability of particles that aggregate at the boundary. Here we plot the  probability as a function of radius $R$ for different $D_\theta$. 

 For self-propelled particles confined within a disk, at steady-state, the fraction of particles at the boundary increases as the disk radius $R$ decreases. Physically, particles in a smaller disk encounter the boundary more frequently and remain there until their swimming direction allows them to return to the interior. This trend is qualitatively consistent with the experimental observations and simulations in \cite{ostapenko2018curvature}, where the authors observe stronger microalgal accumulation near confining walls of higher curvature. At a fixed radius, increasing the rotational diffusion coefficient $D_\theta$ reduces boundary accumulation: stronger angular fluctuations help particles reorient toward the interior, shortening their residence time at the boundary.

\begin{figure}[!tbh]
    \centering
    \includegraphics[scale=0.5]{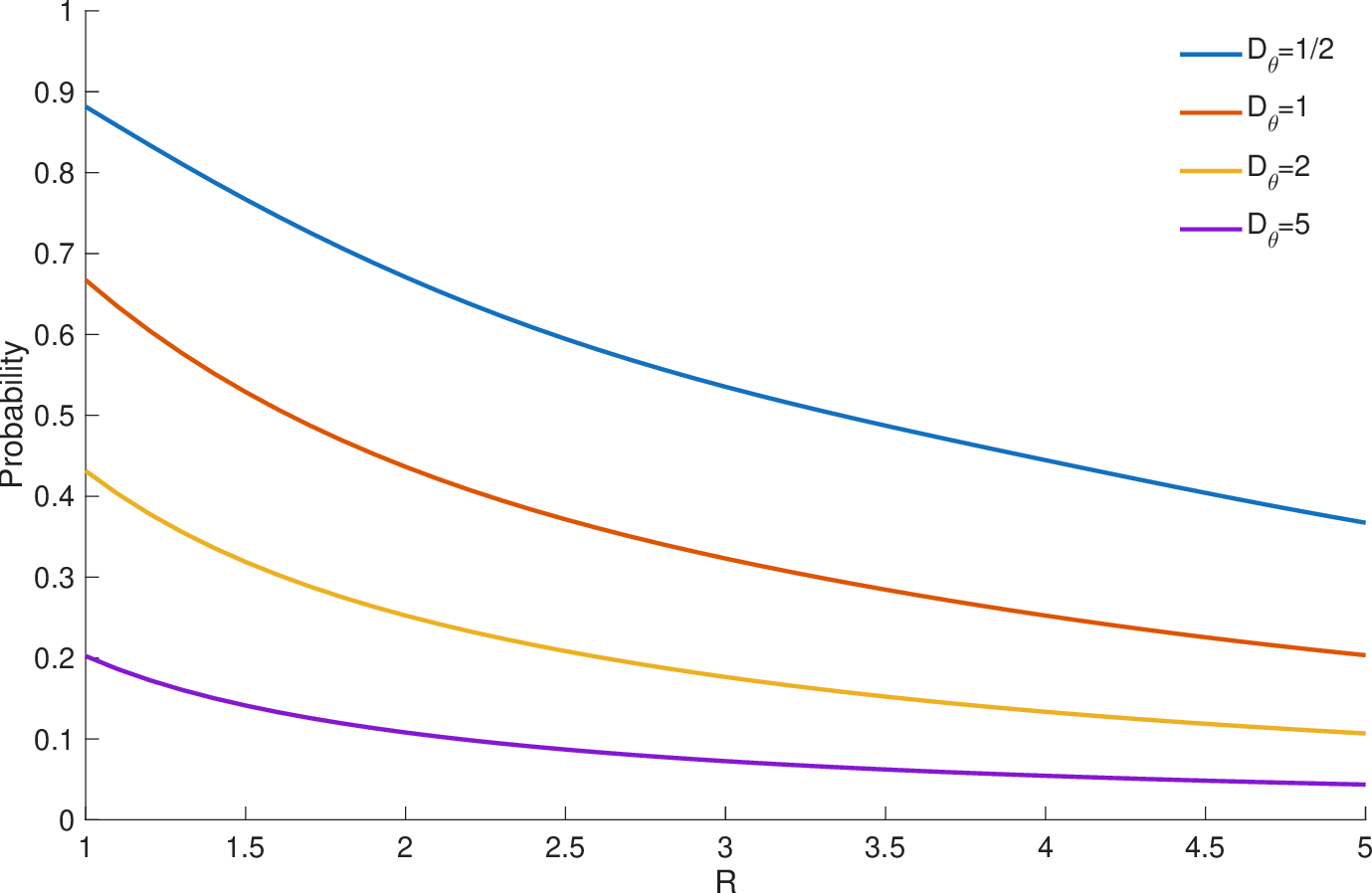}
    \caption{Steady-state probability of finding a particle at the boundary of a circular domain as a function of the domain radius \(R\). Different curves correspond to different rotational diffusion coefficients \(D_\theta\). The time step is \(\Delta t = 0.001\).}
    \label{fig:circularRvariance}
\end{figure}

To further investigate the distributions of particles at steady states, we consider the marginal probability density function
\begin{equation}
    \Phi(r) =\int_{-\pi/2}^{3\pi/2} rp(r,\eta)\dd\eta\cdot\chi(0<r<R)+\int_{-\pi/2}^{\pi/2} Rq(\eta)\dd\eta\cdot\chi(r=R),
\end{equation}
and the results are shown in Figure \ref{fig:circulardensity}. Here for $0<r<R$, $\Phi(r)$ denotes the radial probability density, whereas $\Phi(R)$ denotes the probability density at the boundary. The peaks all occur near the boundaries of the domains under consideration, indicating the presence of boundary accumulation. The marginal density profiles become nearly linear away from the boundaries, implying uniform distribution in radial direction in the interior of the domains. Figure \ref{fig:circularprobability} shows the probability of finding particles within a radial interval of width $\Delta r$ centered at $r$, obtained by integrating the $\Phi(r)$. The pdf increases sharply near the boundary while remaining linear inside the domains, in agreement with the boundary accumulation phenomenon already observed in Figure \ref{fig:circulardensity}.
\begin{figure}[!tbh]
    \centering
    \includegraphics[scale=0.5]{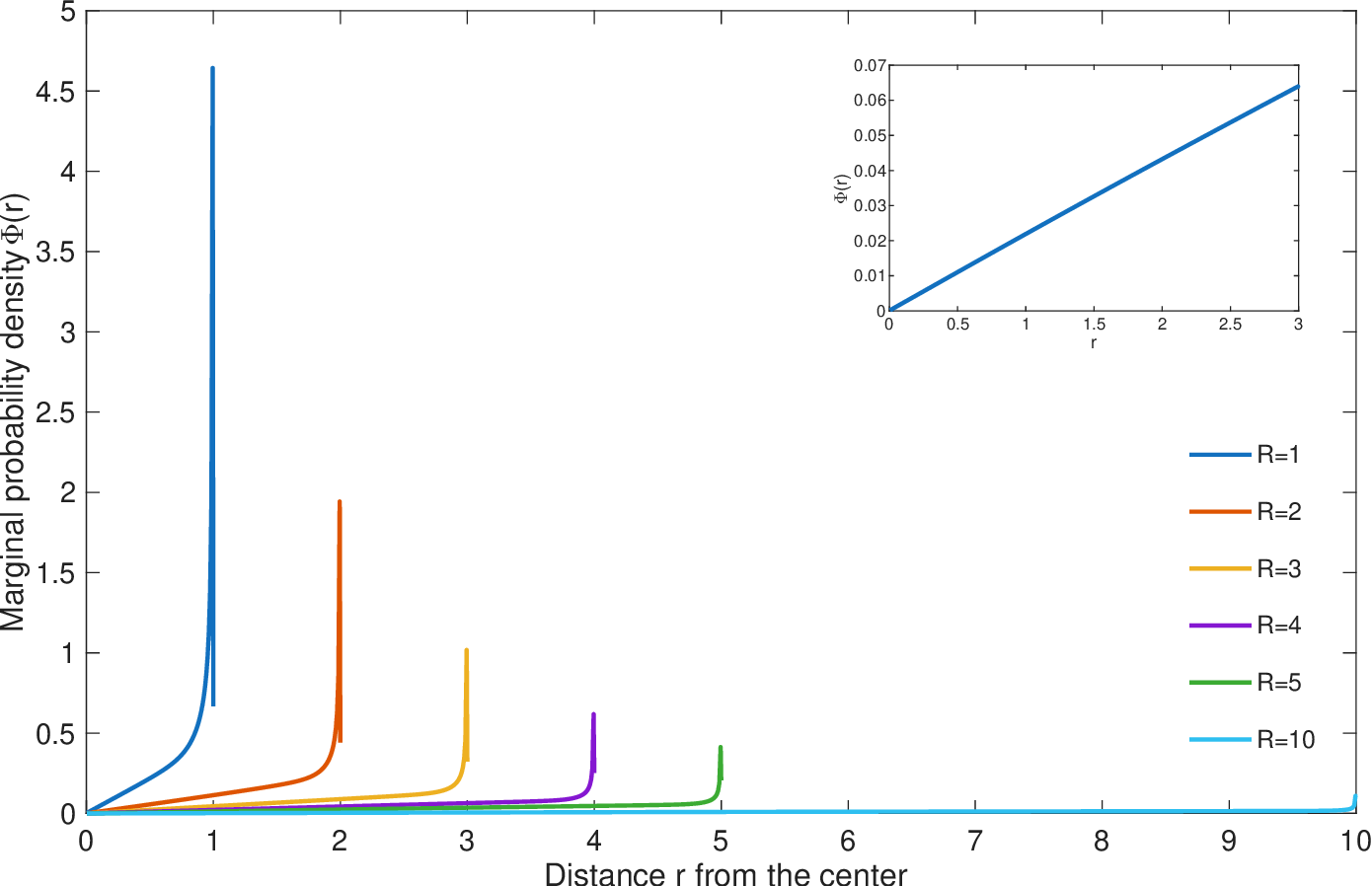}
    \caption{Marginal probability density $\Phi(r)$ in domains with different radii $R$ at their respective steady states. (Inset: zoomed-in view of the results when $R=4$ in the bulk region.) $V = 1$, $D_\theta = 1$ and $\Delta t=0.001$ for each curve.}
    \label{fig:circulardensity}
\end{figure}
\begin{figure}[!tbh]
    \centering
    \includegraphics[scale=0.5]{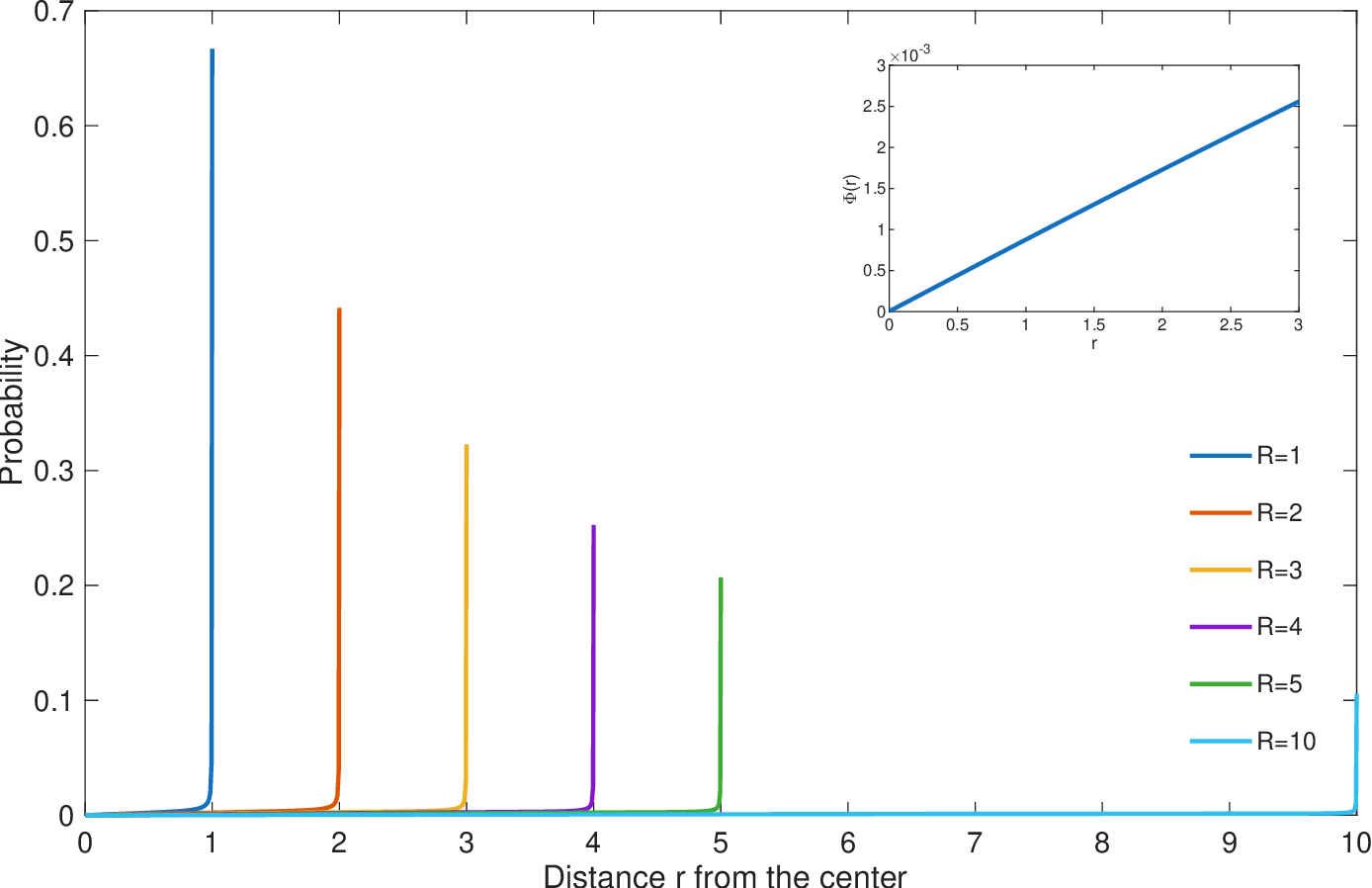}
    \caption{Probability distribution in domains with different radii $R$ at their respective steady states. (Inset: zoomed-in view of the results when $R=4$ in the bulk region.) $V = 1$, $D_\theta = 1$ and $\Delta t=0.001$ for each curve.}
    \label{fig:circularprobability}
\end{figure}

\paragraph{Boundary accumulation in an annular domain.}
To demonstrate the effects of shape parameters $R_\mathrm{in}$, $R_\mathrm{out}$ and diffusion strength $D_\theta$, we consider $P = \iint rp(t,r,\eta)\dd\eta\dd r$, $Q_\mathrm{in} = \int R_\mathrm{in}q_\mathrm{in}(\eta)\dd\eta$ and $Q_\mathrm{out} = \int R_\mathrm{out}q_\mathrm{out}(\eta)\dd\eta$ to represent the probabilities of finding particles inside the domain, moving along the inner boundary and the outer boundary respectively.

Figure~\ref{fig:annulusRinvariance} shows the steady-state probabilities of finding a particle in the interior, $P$, at the inner boundary, $Q_\mathrm{in}$, and at the outer boundary, $Q_\mathrm{out}$, as functions of the inner radius $R_\mathrm{in}$ for different rotational diffusion coefficients $D_\theta$, with $R_\mathrm{out}=5$. As $R_\mathrm{in}$ increases, the annulus becomes narrower: $P$ decreases, whereas both boundary probabilities increase. This trend is consistent with more frequent encounters with the boundaries as the available interior space shrinks. Throughout the range considered, $Q_\mathrm{out}>Q_\mathrm{in}$, indicating that a larger fraction of particles resides at the outer boundary. Increasing $D_\theta$ reduces boundary accumulation and increases the probability of finding particles in the interior, consistent with the results for circular domains in Section~\ref{ch:5.2.2}. Stronger angular fluctuations facilitate reorientation toward the interior and thus promote escape from the boundaries.

\begin{figure}[!tbh]
\centering
\begin{minipage}{0.47\textwidth}
\centering
\includegraphics[height=3.5cm]{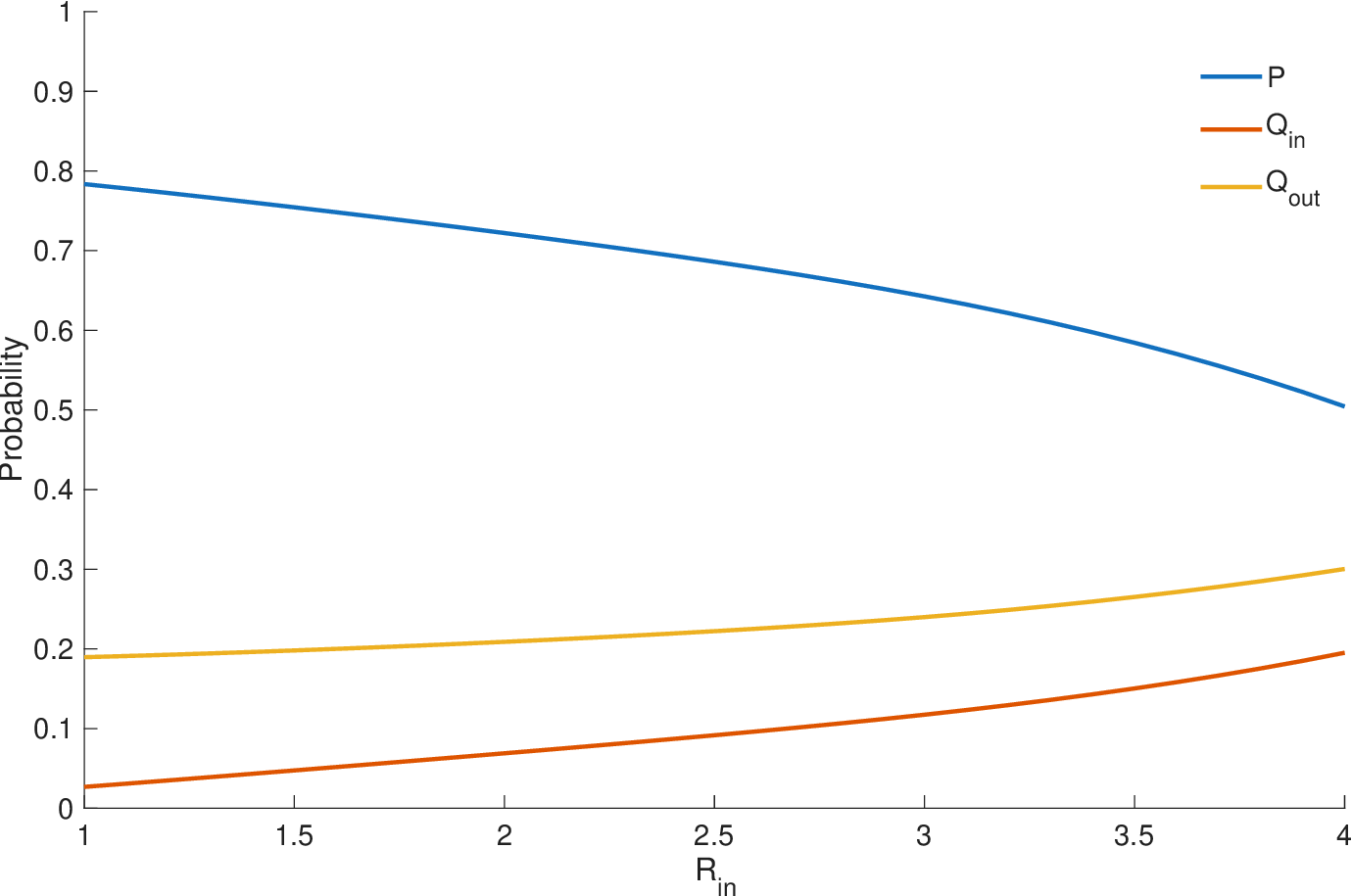}
\end{minipage}
\begin{minipage}{0.47\textwidth}
\centering
\includegraphics[height=3.5cm]{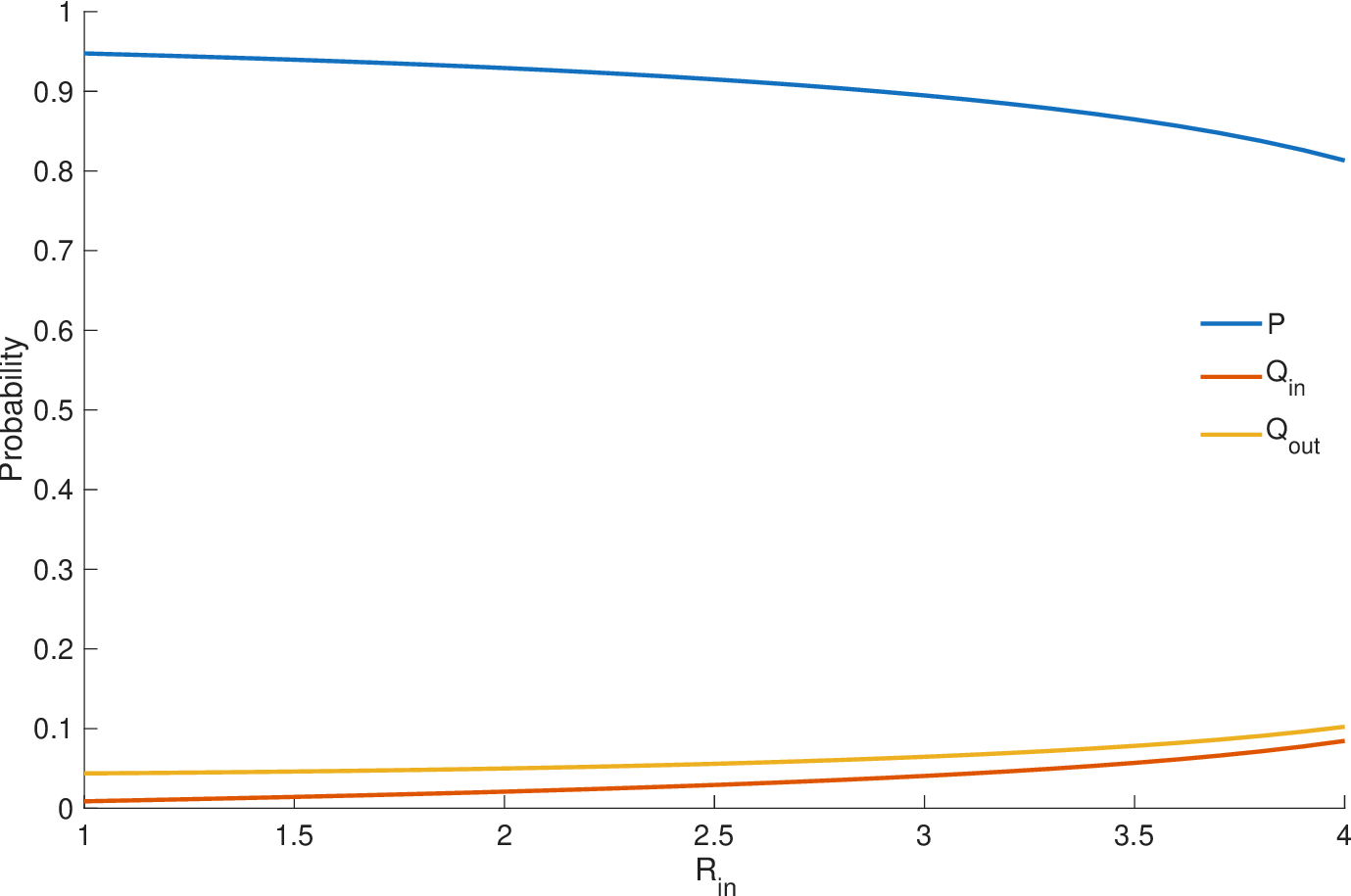}
\end{minipage}
\caption{Steady-state probabilities of finding a particle in the interior, $P=\iint r p(t,r,\eta)\,\dd\eta\,\dd r$, at the inner boundary, $Q_\mathrm{in}=\int R_\mathrm{in}q_\mathrm{in}(\eta)\,\dd\eta$, and at the outer boundary, $Q_\mathrm{out}=\int R_\mathrm{out}q_\mathrm{out}(\eta)\,\dd\eta$, as functions of the inner radius $R_\mathrm{in}$ of an annular domain. The swimming speed and outer radius are fixed at $V=1$ and $R_\mathrm{out}=5$. Left: $D_\theta=1$; right: $D_\theta=5$.}
\label{fig:annulusRinvariance}
\end{figure}

Figure~\ref{fig:annulusRoutvariance} shows the steady-state probabilities $P$, $Q_\mathrm{in}$, and $Q_\mathrm{out}$ as functions of the outer radius $R_\mathrm{out}$ for different rotational diffusion coefficients $D_\theta$, with $R_\mathrm{in}=1$. Increasing $R_\mathrm{out}$ widens the annulus and provides more interior space, increasing the probability $P$ of finding particles in the interior. For the range considered, $Q_\mathrm{out}>Q_\mathrm{in}$, so a larger fraction of particles resides at the outer boundary than at the inner boundary. Increasing $D_\theta$ further promotes particle residence in the interior and reduces boundary accumulation, consistent with the preceding results: stronger angular fluctuations facilitate reorientation away from the boundaries.
\begin{figure}[!tbh]
\centering
\begin{minipage}{0.47\textwidth}
\centering
\includegraphics[height=3.5cm]{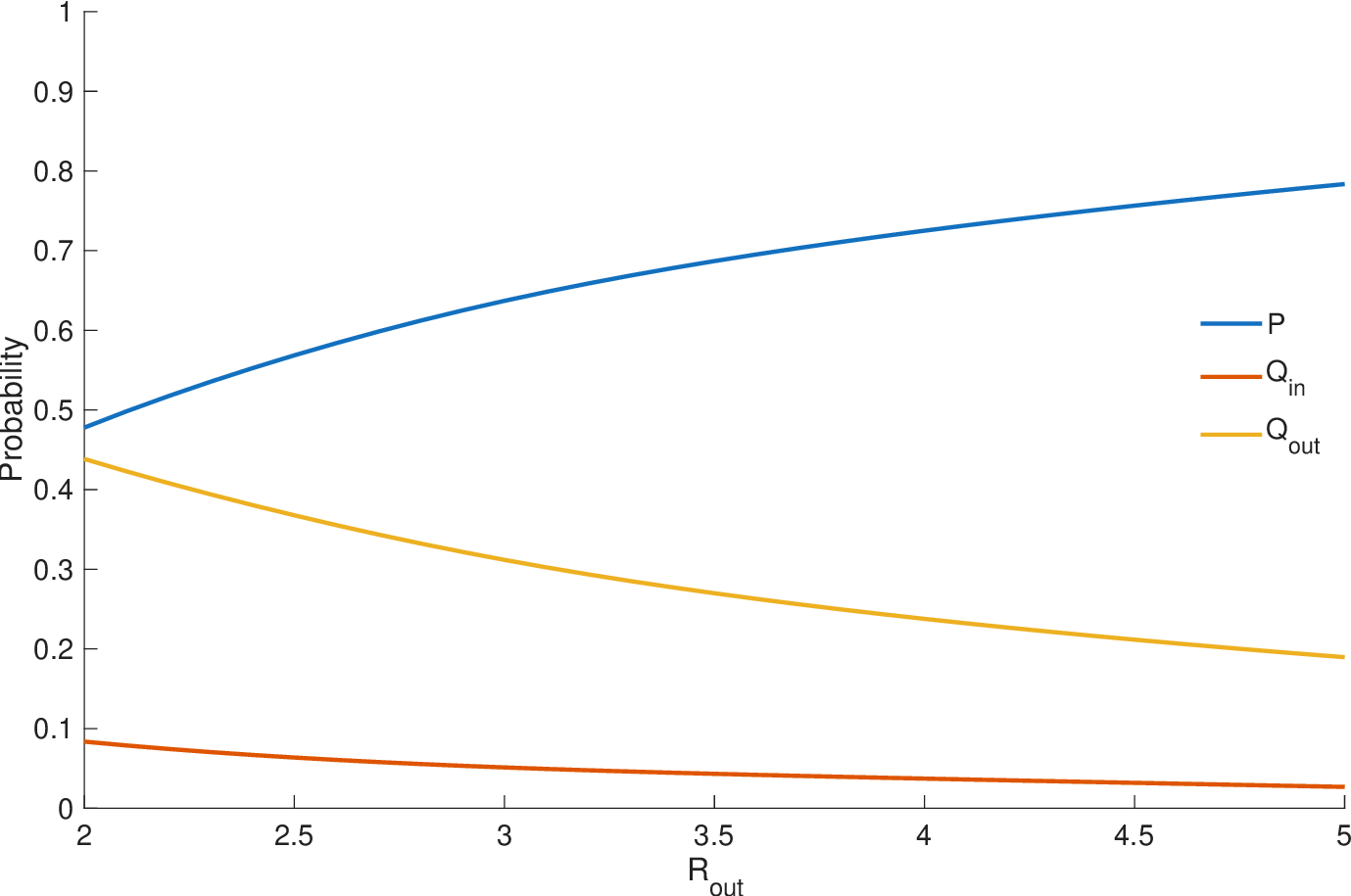}
\end{minipage}
\begin{minipage}{0.47\textwidth}
\centering
\includegraphics[height=3.5cm]{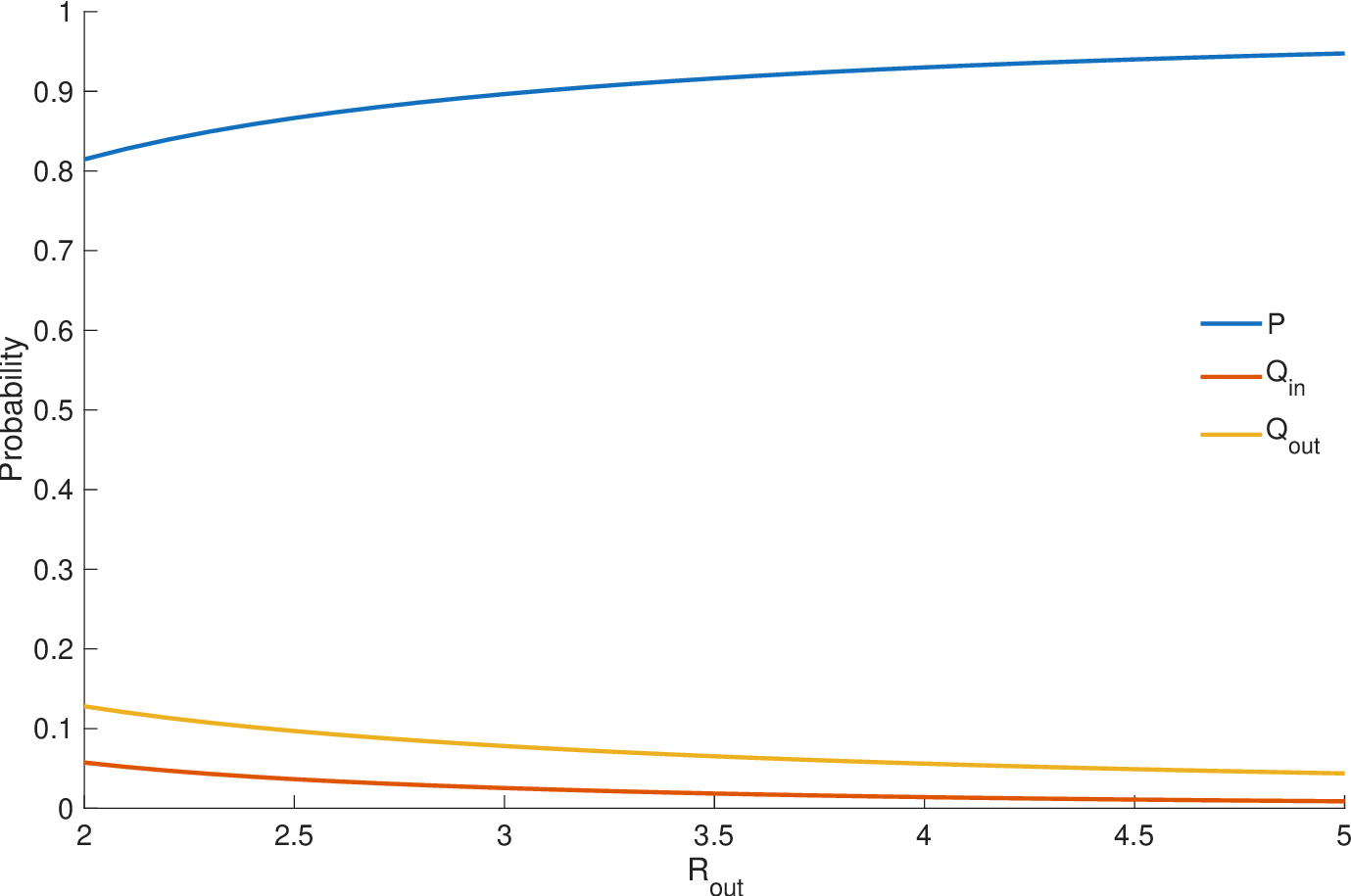}
\end{minipage}
\caption{Steady-state probabilities of finding a particle in the interior, $P=\iint r p(t,r,\eta)\,\dd\eta\,\dd r$, at the inner boundary, $Q_\mathrm{in}=\int R_\mathrm{in}q_\mathrm{in}(\eta)\,\dd\eta$, and at the outer boundary, $Q_\mathrm{out}=\int R_\mathrm{out}q_\mathrm{out}(\eta)\,\dd\eta$, as functions of the outer radius $R_\mathrm{out}$ of an annular domain. The swimming speed and inner radius are fixed at $V=1$ and $R_\mathrm{in}=1$. Left: $D_\theta=1$; right: $D_\theta=5$.}
\label{fig:annulusRoutvariance}
\end{figure}

\section{Conclusion}\label{ch:6}
In this paper, individual-based models and Fokker-Planck models are developed to investigate the boundary accumulation dynamics of self-propelled particles in confined two-dimensional circular and annular domains. The proposed framework explicitly incorporates the exchange between the interior free-swimming state and the boundary-contact state through suitable boundary conditions, allowing the boundary accumulation phenomenon to be characterized within a unified probabilistic formulation. Numerical experiments and Monte-Carlo simulations are carried out and compared to validate the consistency between the continuum and stochastic descriptions. The influence of geometric and dynamical parameters on the distributions is also investigated and discussed.

Beyond the results presented in this paper, our models provide a basis for further mathematical analysis and a deeper theoretical understanding of the mechanisms underlying boundary accumulation. Additionally, we intend to extend the framework to confined domains with more complex geometries, such as ellipses, where the behavior of self-propelled particles remains an active area as studied in \cite{cammann2021emergent,ostapenko2018curvature}.

\section*{Acknowledgments}

At the end of paper but preceding the References.

The author would like to thank  ....


\end{document}